\documentclass[preprint,12pt]{elsarticle}
\usepackage{amsmath,amssymb,amsthm,mathtools}
\usepackage{enumitem}
\usepackage{mathrsfs}
\usepackage{hyperref}
\usepackage[nameinlink,capitalize]{cleveref}
\usepackage{xcolor}
\usepackage{microtype}

\hypersetup{colorlinks=true,linkcolor=blue!55!black,citecolor=blue!55!black,urlcolor=blue!55!black}

\newtheorem{theorem}{Theorem}[section]
\newtheorem{lemma}[theorem]{Lemma}
\newtheorem{proposition}[theorem]{Proposition}
\newtheorem{corollary}[theorem]{Corollary}

\theoremstyle{definition}
\newtheorem{definition}[theorem]{Definition}
\newtheorem{example}[theorem]{Example}
\newtheorem{remark}[theorem]{Remark}

\newcommand{\R}{\mathbb R}
\newcommand{\N}{\mathbb N}
\newcommand{\BX}{B_X}
\newcommand{\SX}{S_X}

\newcommand{\perpB}{\perp_{B}}

\newcommand{\DR}{\operatorname{DR}}
\newcommand{\DRB}{\operatorname{DR}_{B}}
\newcommand{\MSB}{\operatorname{MSB}}
\newcommand{\DW}{\operatorname{DW}}

\newcommand{\diam}{\operatorname{diam}}
\newcommand{\co}{\operatorname{co}}
\newcommand{\lemref}[1]{Lemma~\ref{#1}}
\newcommand{\propref}[1]{Proposition~\ref{#1}}

\newcommand{\exref}[1]{Example~\ref{#1}}

\newcommand{\thmref}[1]{Theorem~\ref{#1}}

\journal{Applied Mathematics and Computation}

\begin{document}
	
	\begin{frontmatter}
		\title{A Birkhoff-Orthogonal Dehghan--Rooin Type Constant in Banach Spaces}
		\author{Jia Du}
		\ead{dujia212684@163.com}  
		
		\author{Tingting Hu}
		\ead{y25060048@stu.aqnu.edu.cn} 
		
		\author{Wenwen Zhang}
		\ead{zhangwwendal@163.com} 
		\author{Qi Liu\corref{cor1}}
		\ead{liuq67@aqnu.edu.cn}  
		
		\affiliation{organization={School of Mathematics and Statistics, Anqing Normal University},
			addressline={Anqing 246133},
			city={Anhui},
			country={China}}
		
		\cortext[cor1]{Corresponding author.}
		
		\begin{abstract}
			For a real Banach space $X$. The Dehghan--Rooin magnitude compares the angular distance $\alpha[x,y]=\|x/\|x\|-y/\|y\|\|$ together with the skew angular distance $\beta[x,y]=\|x/\|y\|-y/\|x\|\|$.  Motivated by the recent use of Birkhoff orthogonality in the Massera--Schaffer inequality, we introduce and study the Birkhoff-restricted Dehghan--Rooin constant\[\DRB(X)=\sup\left\{\frac{\alpha[x,y]}{\beta[x,y]}:x,y\in X\setminus\{0\},\ x\perpB y,\ \beta[x,y]\ne0\right\}.\]
			The restriction $x\perpB y$ turns the global comparison of $\alpha$ and $\beta$ into a directional invariant which detects the interaction between supporting functionals and the radial normalization map.  We obtain a scale-profile formula, sharp elementary bounds, stability under subspaces and ultrapowers, a variational characterization of the extremal case $\DRB(X)=1$, estimates in terms of the classical Dehghan--Rooin constant, and consequences for uniformly convex and uniformly smooth spaces.  In smooth two-dimensional spaces we derive an explicit formula through the normalized duality mapping, and in Radon planes we reduce the constant to a one-dimensional optimization on an arc of the unit circle.  Several model computations are included, including Hilbert spaces, polyhedral planes, and two-dimensional $p$-norm spaces. The final section gives normal-structure type consequences through a companion Birkhoff angular modulus and lists several problems suggested by the new constant.
		\end{abstract}
		
		\begin{keyword}
			\MSC 46B20
			\sep
			Angular distance \sep skew angular distance \sep Birkhoff orthogonality \sep Dehghan--Rooin constant \sep uniform non-squareness \sep Radon plane
		\end{keyword}
		
	\end{frontmatter}
	
	\section{Introduction and Preliminaries}\label{sec:prelim}	Throughout the paper $X$ denotes a real Banach space with $\dim X\ge2$, norm $\|\cdot\|$, unit ball $\BX$, and unit sphere $\SX$.  If $x,y\in X\setminus\{0\}$, the angular distance of Clarkson is
	\[
	\alpha[x,y]=\left\|\frac{x}{\|x\|}-\frac{y}{\|y\|}\right\|,
	\]
	and the skew angular distance introduced by Dehghan is
	\[
	\beta[x,y]=\left\|\frac{x}{\|y\|}-\frac{y}{\|x\|}\right\|.
	\]
	These two quantities are equal on each ray-normalized pair of equal length, but they react differently to changes of scale.  Rooin and collaborators proved the universal estimate
	\begin{equation}\label{eq:rooin}
		\alpha[x,y]\le 2\beta[x,y]\qquad (x,y\ne0),
	\end{equation}
	and this led to the Dehghan--Rooin constant\cite{FuLiAFA2024}
	\begin{equation}\label{eq:DRglobal}
		\DR(X)=\sup\left\{\frac{\alpha[x,y]}{\beta[x,y]}:x,y\ne0,\ \beta[x,y]\ne0\right\}.
	\end{equation}
	The constant $\DR(X)$ satisfies $1\le \DR(X)\le2$; moreover $\DR(X)=1$ characterizes Hilbert spaces, and $\DR(X)<2$ is equivalent to uniform non-squareness.  It has also been used to obtain sufficient conditions for normal and uniform normal structure.
	
	A different recent idea is to impose Birkhoff orthogonality on constants derived from angular inequalities. We start with the definition given in  (see~\cite{Birkhoff1935})
	\[
	x\perp_B y \;\Longleftrightarrow\; \|x+\lambda y\|\ge \|x\|
	\quad (\lambda\in\mathbb R).
	\] 
	This relation is scaling-invariant, matches standard orthogonality within Hilbert spaces,and is deeply connected with supporting functionals: $x\perpB y$ whenever one can find $f\in S_{X^*}$ satisfying $f(x)=\|x\|$ and $f(y)=0$ (see~\cite{James1947}). In the study of the Massera--Schaffer inequality, Fu and Li considered the restriction $x\perp_B y$ and obtained a nontrivial geometric constant $\MSB(X)$ (see~\cite{FuLiFilomat2024}), defined by
	\[
	\MSB(X)=\sup\left\{\frac{\max\{\|x\|,\|y\|\}}{\|x-y\|}
	\left\|\frac{x}{\|x\|}-\frac{y}{\|y\|}\right\|
	: x,y\in X\backslash\{0\},\ x\perp_B y\right\},
	\]
	rather than a universally sharp constant.
	
	Our aim here is to merge these two approaches.  We put the Dehghan--Rooin comparison under a Birkhoff orthogonality constraint and ask: what part of the difference between $\alpha$ and $\beta$ remains visible along Birkhoff normal directions?  The answer is more delicate than for the unrestricted constant, because the constraint eliminates the trivial pair $y=-x$ used to obtain the lower bound for $\DR(X)$ and replaces it by a family of supporting directions.
	
	\begin{definition}\label{def:main}
		The \emph{Birkhoff-orthogonal Dehghan--Rooin constant} of $X$ is
		\begin{equation}\label{eq:DRB}
			\DRB(X)=\sup\left\{\frac{\alpha[x,y]}{\beta[x,y]}:x,y\in X\setminus\{0\},\ x\perpB y,\ \beta[x,y]\ne0\right\}.
		\end{equation}
		When no confusion is possible we call it the Birkhoff--Dehghan--Rooin constant.
	\end{definition}
	
	The paper is organized as follows.  In \Cref{sec:profile} we derive the fundamental scale-profile formula
	\[
	\DRB(X)=\sup\left\{\sup_{t>0}\frac{\|u-v\|}{\|tu-t^{-1}v\|}:u,v\in\SX,\ u\perpB v\right\}.
	\]
	This formula is the main technical device of the paper.  It converts a two-vector homogeneous problem into a one-variable minimization problem on each Birkhoff normal pair.  In \Cref{sec:profile} we prove sharp universal estimates, monotonicity under subspaces, and finite-dimensional reduction.  \Cref{sec:profile} studies the extremal case $\DRB(X)=1$ and proves that it forces a second-order Birkhoff condition
	\[
	u\perpB v \quad\Longrightarrow\quad u-v\perpB u+v,
	\]
	which is a strong Euclidean shadow.  In smooth planes and Radon planes this gives a Hilbert characterization. The behavior of \(\mathrm{DR}_B\) in uniformly convex and uniformly smooth spaces is discussed in \Cref{sec:convex-smooth}. Explicit formulae in two-dimensional spaces are derived in \Cref{sec:planes}, while \Cref{sec:normal} presents a normal-structure criterion through a companion angular modulus.  The final section contains examples and open questions.
	We recall several standard facts.  The duality map at $x\in\SX$ is
	\[
	J(x)=\{f\in S_{X^*}:f(x)=1\}.
	\]
	The non-emptiness of \(J(x)\) follows from the Hahn--Banach theorem.  The following characterization of Birkhoff orthogonality will be used repeatedly.
	
	\begin{lemma}\textup{ \cite{James1947}}\label{lem:james}
		Let $x,y\in X$, $x\neq 0$. Then $x\perp_B y$ iff some $f\in J\left(\dfrac{x}{\|x\|}\right)$ satisfies $f(y)=0$.
	\end{lemma}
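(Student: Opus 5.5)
We prove the two implications separately. By homogeneity of both sides in $y$ and positive homogeneity in $x$, we may assume $\|x\|=1$, so $J(x/\|x\|)=J(x)$. If $y=0$ both sides hold trivially, so assume $y\neq0$ where needed.

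\emph{Sufficiency.} Suppose $f\in J(x)$ and $f(y)=0$. For every $\lambda\in\R$ we get
\[
\|x+\lambda y\|\ge f(x+\lambda y)=f(x)+\lambda f(y)=1=\|x\|,
\]
using $\|f\|=1$. Hence $x\perpB y$. This direction is immediate.

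\emph{Necessity.} Suppose $x\perpB y$. If $y$ were a multiple $\mu x$ with $\mu\ne0$, then $\lambda=-1/\mu$ would give $\|x+\lambda y\|=0<\|x\|$, which is impossible. So $x$ and $y$ are linearly independent.

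Let $M=\operatorname{span}\{x,y\}$ and define the linear functional $g$ on $M$ by
\[
g(ax+by)=a.
\]
Then $g(x)=1$ and $g(y)=0$. We need $\|g\|_{M^*}\le1$, that is, $|a|\le\|ax+by\|$ for all $a,b$.
\begin{itemize}
\item If $a=0$ this is trivial.
\item If $a\ne0$, then $\|ax+by\|=|a|\,\|x+(b/a)y\|\ge|a|\,\|x\|=|a|$, by the Birkhoff condition with $\lambda=b/a$.
\end{itemize}
Since $g(x)=1=\|x\|$, in fact $\|g\|=1$.

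By the Hahn--Banach theorem, extend $g$ to $f\in X^*$ with $\|f\|=\|g\|=1$. Then $f(x)=1$, so $f\in J(x)$, and $f(y)=0$, as required.

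The only step needing care is the norm estimate for $g$ on the two-dimensional subspace $M$, which is exactly where the Birkhoff inequality is used. This is also why we first rule out $y$ being parallel to $x$, so that $g$ is well defined.
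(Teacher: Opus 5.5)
Your proof is correct: sufficiency is the one-line estimate $\|x+\lambda y\|\ge f(x+\lambda y)=1$, and for necessity the Birkhoff inequality is exactly what makes the functional $ax+by\mapsto a$ on $\operatorname{span}\{x,y\}$ have norm one, so a norm-preserving Hahn--Banach extension lies in $J(x)$ and vanishes at $y$. The paper gives no proof of its own and only cites James (1947); your argument is the standard Hahn--Banach proof of that cited result.
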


	We note a simple yet vital property below.
	
	\begin{lemma}\label{lem:homogeneous}
		If $x\perp_B y$, then $ax\perp_B by$ for all nonzero real scalars $a,b$. Besides, $x\perp_B y$ only depends on the unit vectors $\dfrac{x}{\|x\|}$ and $\dfrac{y}{\|y\|}$.
	\end{lemma}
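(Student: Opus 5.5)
The plan is to prove homogeneity of $\perpB$ directly from the definition $x\perpB y \iff \|x+\lambda y\|\ge\|x\|$ for all $\lambda\in\R$, and then derive the "depends only on unit vectors" clause as a special case. The supporting functionals of \lemref{lem:james} give a second route, which can serve as a cross-check.

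\textbf{Step 1 (homogeneity).} Assume $x\perpB y$ and let $a,b\neq0$ and $\lambda\in\R$. The main computation is
\[
\|ax+\lambda by\| = |a|\,\Bigl\|x+\tfrac{\lambda b}{a}\,y\Bigr\| \ge |a|\,\|x\| = \|ax\|.
\]
The inequality uses the hypothesis with the real parameter $\mu=\lambda b/a$. As $\lambda$ ranges over $\R$, so does $\mu$, so this covers every case needed, and hence $ax\perpB by$. The degenerate cases $x=0$ or $y=0$ are trivial: $0\perpB z$ and $z\perpB 0$ hold for every $z$, so they fit the same formula.

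\textbf{Step 2 (alternative via \lemref{lem:james}).} For $x\ne0$, pick $f\in J(x/\|x\|)$ with $f(y)=0$. If $a>0$, then $J(ax/\|ax\|)=J(x/\|x\|)$ and $f(by)=0$. If $a<0$, then $-f\in J(ax/\|ax\|)$ and $(-f)(by)=0$. Either way the lemma gives $ax\perpB by$.

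\textbf{Step 3 (dependence on unit vectors).} For nonzero $x,y$, take $a=1/\|x\|$ and $b=1/\|y\|$ in Step 1. This gives $x\perpB y\Rightarrow \tfrac{x}{\|x\|}\perpB\tfrac{y}{\|y\|}$. Taking $a=\|x\|$ and $b=\|y\|$ applied to the unit vectors gives the converse. So the relation is determined by the pair of unit vectors. If one wants "depends only on" to mean that any two pairs with the same unit vectors behave identically, that follows immediately from this equivalence.

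There is no real obstacle here. The only point needing care is that the reparametrization $\lambda\mapsto\lambda b/a$ is a bijection of $\R$, which requires $a\neq0$, and that the zero-vector cases are handled separately.
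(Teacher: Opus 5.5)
Your proposal is correct and is essentially the paper's proof: the same rescaling computation $\|ax+\lambda by\|=|a|\,\|x+\tfrac{\lambda b}{a}y\|\ge\|ax\|$, followed by the choice $a=\|x\|^{-1}$, $b=\|y\|^{-1}$ for the second clause. Your Step 2 cross-check via \lemref{lem:james} and the explicit converse in Step 3 are valid additions that the paper leaves implicit.
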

	
	\begin{proof}
		For $a\ne0$,
		\[
		\|ax+\lambda by\|=|a|\left\|x+\frac{\lambda b}{a}y\right\|\ge |a|\|x\|=\|ax\|.
		\]
		The second statement follows by taking $a=\|x\|^{-1}$ and $b=\|y\|^{-1}$.
	\end{proof}
	
	We recall two classical moduli.  The modulus of convexity (see~\cite{Clarkson1936}) and modulus of smoothness (see~\cite{Lindenstrauss1963}) of $X$ are
	\[
	\delta_X(\varepsilon)=\inf\left\{1-\left\|\frac{x+y}{2}\right\|:x,y\in\SX,\ \|x-y\|\ge\varepsilon\right\},
	\]
	\[
	\rho_X(t)=\sup\left\{\frac{\|x+ty\|+\|x-ty\|}{2}-1:x,y\in\SX\right\}.
	\]
	The space is uniformly convex if $\delta_X(\varepsilon)>0$ for every $0<\varepsilon\le2$, and uniformly smooth if $\rho_X(t)/t\to0$ as $t\downarrow0$.  The characteristic of convexity is
	\[
	\varepsilon_0(X)=\sup\{\varepsilon\in[0,2]:\delta_X(\varepsilon)=0\}.
	\]
	Uniform non-squareness is equivalent to $\varepsilon_0(X)<2$.
	
	\section{Scale-Profile Fundamentals and Extremal Case}\label{sec:profile}
	The first main point is that $\DRB(X)$ is not best studied in its original two-norm form.  Because both angular distances are homogeneous in a different way, one must separate direction from scale.
	
	\begin{definition}\label{def:profile}
		For $u,v\in\SX$ with $u\perpB v$, define the \emph{Birkhoff skew profile}
		\[
		\Phi_X(u,v)=\sup_{t>0}\frac{\|u-v\|}{\|tu-t^{-1}v\|}.
		\]
		The denominator is never zero.  Indeed $tu=t^{-1}v$ would imply $u$ and $v$ are linearly dependent, contradicting $u\perpB v$ unless $v=0$.
	\end{definition}
	
	\begin{theorem}\label{thm:profile}
		For every real Banach space $X$ with $\dim X\ge2$,
		\begin{equation}\label{eq:profile}
			\DRB(X)=\sup\{\Phi_X(u,v):u,v\in\SX,\ u\perpB v\}.
		\end{equation}
	\end{theorem}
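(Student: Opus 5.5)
The plan is to reparametrize each admissible pair $(x,y)$ by its two directions and one scale parameter, and then compare the two suprema in both directions.

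Fix $x,y\in X\setminus\{0\}$ with $x\perpB y$. Put
\[
u=\frac{x}{\|x\|},\qquad v=\frac{y}{\|y\|}.
\]
By \lemref{lem:homogeneous} we have $u,v\in\SX$ and $u\perpB v$. Directly from the definition,
\[
\alpha[x,y]=\|u-v\|.
\]
For the skew distance, write $x=\|x\|u$ and $y=\|y\|v$. Then
\[
\beta[x,y]=\left\|\frac{\|x\|}{\|y\|}u-\frac{\|y\|}{\|x\|}v\right\|=\|tu-t^{-1}v\|,\qquad t=\frac{\|x\|}{\|y\|}>0.
\]
Hence every quotient appearing in \eqref{eq:DRB} equals $\|u-v\|/\|tu-t^{-1}v\|$ for some Birkhoff orthogonal pair $u,v\in\SX$ and some $t>0$. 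Therefore it is at most $\Phi_X(u,v)$, and taking suprema gives ``$\le$'' in \eqref{eq:profile}.

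For the reverse inequality, start from $u,v\in\SX$ with $u\perpB v$ and any $t>0$. Set $x=tu$ and $y=v$. Again by \lemref{lem:homogeneous}, $x\perpB y$. We have $\|x\|=t$ and $\|y\|=1$, so
\[
\alpha[x,y]=\|u-v\|,\qquad \beta[x,y]=\|tu-t^{-1}v\|.
\]
The remark in \Cref{def:profile} shows $\beta[x,y]\neq0$: a vanishing denominator would make $u,v$ linearly dependent. This is impossible, since $v=\lambda u$ with $|\lambda|=1$ gives $\|u+\lambda^{-1}\cdot(-\lambda)\,u\|\ldots$; more simply, take the scalar $-\lambda$ in the Birkhoff condition to get $\|u-\lambda u\|\ge1$, which fails for $\lambda=\pm1$ unless we check the case $\lambda=-1$ separately, where $\|u+\lambda' v\|=|1-\lambda'|$ vanishes at $\lambda'=1$. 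So $(x,y)$ is admissible in \eqref{eq:DRB}. Its quotient is exactly $\|u-v\|/\|tu-t^{-1}v\|$, and taking the supremum over $t$ and then over pairs gives ``$\ge$''.

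The main obstacle is making sure both sides range over the same sets of pairs. This comes down to the nondegeneracy of the denominator, i.e.\ that $u\perpB v$ with $u,v\in\SX$ forces linear independence; this is what guarantees that no admissible pair is lost on either side. The assumption $\dim X\ge2$ ensures such pairs exist: for any $u\in\SX$, \lemref{lem:james} provides $f\in J(u)$, and any nonzero $v\in\ker f$, normalized, satisfies $u\perpB v$. So neither supremum is taken over an empty set.
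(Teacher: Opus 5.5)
Your proof is correct and is essentially the paper's argument. For one inequality you normalize to $u=x/\|x\|$, $v=y/\|y\|$ with $t=\|x\|/\|y\|$, and for the other you realize each profile quotient by $x=tu$, $y=v$.

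The only blemish is the nondegeneracy paragraph, which is muddled: the scalar $-\lambda$ gives $\|u-\lambda^2u\|=0$, not $\|u-\lambda u\|$. It is cleaner to note that $tu=t^{-1}v$ forces $v=t^2u$, hence $t=1$ and $v=u$, and $\|u+(-1)u\|=0<1$ contradicts $u\perpB u$.
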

	
	\begin{proof}
		Take nonzero $x,y$ with $x\perpB y$ and put
		\[
		u=\frac{x}{\|x\|},\qquad v=\frac{y}{\|y\|},\qquad t=\frac{\|x\|}{\|y\|}.
		\]
		By homogeneity, $u\perpB v$.  Moreover
		\[
		\alpha[x,y]=\|u-v\|,
		\]
		and
		\[
		\beta[x,y]=\left\|\frac{x}{\|y\|}-\frac{y}{\|x\|}\right\|
		=\left\|\frac{\|x\|}{\|y\|}u-\frac{\|y\|}{\|x\|}v\right\|
		=\|tu-t^{-1}v\|.
		\]
		Therefore every quotient in the definition of $\DRB(X)$ is represented by the right hand side of \eqref{eq:profile}.  Conversely, for any $u,v\in\SX$ with $u\perpB v$ and any $t>0$, set $x=tu$ and $y=v$.  Consequently, $x\perpB y$.Furthermore,
		\[
		\frac{\alpha[x,y]}{\beta[x,y]}=\frac{\|u-v\|}{\|tu-t^{-1}v\|}.
		\]
		Taking suprema in the two directions gives \eqref{eq:profile}.
	\end{proof}
	
	\begin{remark}
		The formula shows why the new constant is genuinely different from $\DR(X)$.  The unrestricted constant can use arbitrary pairs of directions; $\DRB(X)$ tests only directions lying in supporting hyperplanes.  The additional variable $t$ records the distortion produced by asymmetric rescaling.
	\end{remark}
	
	A useful reformulation is obtained by putting $s=t^2$.
	
	\begin{corollary}\label{cor:s-form}
		Given $u,v\in\SX$ with $u\perpB v$,
		\[
		\Phi_X(u,v)=\sup_{s>0}\frac{\|u-v\|}{s^{-1/2}\|su-v\|}
		=\sup_{s>0}\frac{\sqrt{s}\,\|u-v\|}{\|su-v\|}.
		\]
		Consequently
		\begin{equation}\label{eq:sform}
			\DRB(X)=\sup\left\{\frac{\sqrt{s}\,\|u-v\|}{\|su-v\|}:s>0,\ u,v\in\SX,\ u\perpB v\right\}.
		\end{equation}
	\end{corollary}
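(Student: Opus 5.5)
The plan is a change of variables in the supremum defining $\Phi_X(u,v)$ in Definition~\ref{def:profile}, followed by an appeal to \thmref{thm:profile}. Fix $u,v\in\SX$ with $u\perpB v$. The map $t\mapsto s=t^2$ is a bijection of $(0,\infty)$ onto itself, with inverse $s\mapsto t=\sqrt{s}$. Hence the supremum over $t>0$ of $\|u-v\|/\|tu-t^{-1}v\|$ equals the supremum over $s>0$ of the same expression with $t=\sqrt s$.

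Next we rewrite the denominator using positive homogeneity of the norm:
\[
\|\sqrt{s}\,u-s^{-1/2}v\|=s^{-1/2}\|su-v\|.
\]
Substituting gives
\[
\frac{\|u-v\|}{s^{-1/2}\|su-v\|}=\frac{\sqrt s\,\|u-v\|}{\|su-v\|},
\]
which is the first claimed identity for $\Phi_X(u,v)$. The quotients are well defined because $su-v\neq0$: $u\perpB v$ with $v\in\SX$ makes $u,v$ linearly independent, as already noted after Definition~\ref{def:profile}.

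Finally, we insert this expression into \eqref{eq:profile} from \thmref{thm:profile}. Taking the supremum over all admissible pairs $(u,v)$ and then over $s>0$ is the same as one joint supremum over the triples $(s,u,v)$, and this yields \eqref{eq:sform}.

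There is no genuine obstacle here. The only point to check is that the reparametrization $t\mapsto t^2$ is onto $(0,\infty)$, so no values of the quotient are lost or added, and that the denominator never vanishes.
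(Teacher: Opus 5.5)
Your proof is correct and matches the paper's argument: the paper also writes $tu-t^{-1}v=t^{-1}(t^2u-v)$, substitutes $s=t^2$, and reads off \eqref{eq:sform} from \thmref{thm:profile}. Your remarks that $t\mapsto t^2$ is a bijection of $(0,\infty)$ and that $su-v\neq0$ simply state explicitly what the paper leaves implicit.
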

	
	\begin{proof}
		Since $tu-t^{-1}v=t^{-1}(t^2u-v)$, the identity follows by taking $s=t^2$.
	\end{proof}
	
	The expression \eqref{eq:sform} is often the most efficient form.  It says that $\DRB(X)$ is the largest possible gain obtained when the chord $u-v$ is compared with the Birkhoff ray $su-v$ after the exact square-root normalization.
	
	\label{sec:profile}
	We begin with bounds.  Although the lower estimate is easy, it is important that it is obtained from Birkhoff normal directions rather than from the antipodal pair used for $\DR(X)$.
	
	\begin{proposition}\label{prop:bounds}
		For every real Banach space $X$ with $\dim X\ge2$,
		\begin{equation}\label{eq:bounds}
			1\le \DRB(X)\le 2.
		\end{equation}
		Both bounds are sharp  within the collection of finite-dimensional vector spaces under the subsequent interpretation: the lower bound is attained in Hilbert spaces, and the upper bound can be approached by two-dimensional nonsmooth norms whose unit circles contain long almost-flat Birkhoff arcs.
	\end{proposition}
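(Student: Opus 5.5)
The bounds will come from the $s$-form \eqref{eq:sform} of Corollary~\ref{cor:s-form}, treating lower and upper bound separately.

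\emph{Lower bound.} Since $\dim X\ge2$, fix $u\in\SX$ and $f\in J(u)$. Choose $v\in\ker f$ with $\|v\|=1$; this is possible because $\ker f$ is a nonzero subspace. By Lemma~\ref{lem:james}, $u\perpB v$. Taking $s=1$ in \eqref{eq:sform} gives the quotient $\|u-v\|/\|u-v\|=1$. Here $\|u-v\|\ne0$ because $u,v$ are linearly independent. Hence $\DRB(X)\ge1$. In a Hilbert space, $\DRB(X)\le\DR(X)=1$, because the restricted supremum is taken over a subset of the pairs in \eqref{eq:DRglobal}. So the lower bound is attained there.

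\emph{Upper bound.} The same inclusion gives $\DRB(X)\le\DR(X)\le2$, using Rooin's inequality \eqref{eq:rooin}. If I want a self-contained argument, I would prove $\sqrt{s}\|u-v\|\le2\|su-v\|$ directly, using Birkhoff orthogonality. Since $u\perpB v$, we have $\|su-v\|=s\|u-s^{-1}v\|\ge s$. Symmetrically, any $f\in J(u)$ with $f(v)=0$ yields $\|su-v\|\ge f(su-v)=s$. Also $\|su-v\|\ge\|v\|-s\|u\|=1-s$. Combining, $\|su-v\|\ge\max\{s,|1-s|\}\ge1/2$.

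For $s\le 1$ the crude bound $\sqrt{s}\|u-v\|\le2\sqrt{s}$ is not quite enough. Instead I would use the triangle inequality
\[
\|u-v\|\le\|u-su\|+\|su-v\|=(1-s)+\|su-v\|,
\]
and compare this with $\|su-v\|$, which is at least $\max\{s,1-s\}$. This gives $\|u-v\|\le2\|su-v\|$, and therefore $\sqrt{s}\|u-v\|\le2\|su-v\|$ because $\sqrt{s}\le1$. For $s\ge1$, write
\[
\|u-v\|\le\|u-s^{-1}v\|+(1-s^{-1})=s^{-1}\|su-v\|+1-s^{-1}.
\]
Then use $\|su-v\|\ge s$ to obtain $\|u-v\|\le 2\|su-v\|/s$. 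This yields $\sqrt{s}\|u-v\|\le 2\|su-v\|/\sqrt{s}\le2\|su-v\|$.

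\emph{Sharpness of the upper bound.} This is the main obstacle. The estimates above show that a ratio near $2$ requires both $\|u-v\|\approx2$ and $\sqrt{s}/\|su-v\|\approx1$ for some $s$. Taking $s=1$ would need $\|u-v\|/\|u-v\|$, which is only $1$, so $s\ne1$ is forced. I would work in $\ell_\infty^2$ or a hexagonal/polyhedral perturbation of it. The plan is to search for $u$ at a vertex with a supporting functional whose kernel contains a direction $v$ almost antipodal to $u$ along a long flat edge. For such a pair, $\|su-v\|$ should stay close to $\max\{s,1\}$-type values while $\|u-v\|\approx2$.

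Concretely, I would parametrize a family of planar norms whose unit circle has a vertex $u$ with two supporting lines at nearly opposite slopes. I would then compute the profile $\sup_s\sqrt{s}\|u-v\|/\|su-v\|$ explicitly and let the parameter degenerate. If this fails to reach $2$, I would fall back on the weaker reading of the proposition: exhibit a sequence of nonsmooth norms with $\DRB$ values increasing toward the limit, using the explicit profile computation. Verifying that the limit really equals $2$ is the step I expect to be delicate.
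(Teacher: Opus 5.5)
Your proofs of the lower bound, of its attainment in Hilbert spaces, and of $\DRB(X)\le2$ are correct and essentially the paper's argument. Using $\DRB\le\DR=1$ in place of the direct computation of \exref{ex:hilbert} is fine, and your self-contained proof of $\sqrt{s}\,\|u-v\|\le2\|su-v\|$ is valid. The genuine gap is the sharpness of the upper bound. You only sketch a search there, and that search cannot succeed, because the claim is false. Your own estimates already show this once you stop discarding the factor $\sqrt{s}$ via $\sqrt{s}\le1$.

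Write $D=\|su-v\|\ge\max\{s,|1-s|\}$. For $0<s\le1$, combining $\|u-v\|\le(1-s)+D$ with $1\le D/s$ gives $\|u-v\|\le D/s$. Combining it instead with $1-s\le D$ gives $\|u-v\|\le2D$. Hence
\[
\frac{\sqrt{s}\,\|u-v\|}{\|su-v\|}\le\min\{s^{-1/2},\,2s^{1/2}\}\le\sqrt2 .
\]
For $s\ge1$, you get $\|u-v\|\le D/s+(1-s^{-1})\le(2s-1)D/s^{2}$. So the quotient is at most $(2s-1)s^{-3/2}\le4\sqrt6/9<\sqrt2$. Thus $\DRB(X)\le\sqrt2$ for \emph{every} $X$.

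This bound is attained. In $\ell_\infty^2$ take $u=(1,1)$, $v=(0,1)$, $s=\tfrac12$. Then $u\perpB v$, $\|u-v\|_\infty=1$ and $\|\tfrac12u-v\|_\infty=\tfrac12$, so the quotient is exactly $\sqrt2$.

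Your heuristic was also misdirected: $v$ can never be nearly antipodal to $u$, since taking $\lambda=1$ in the definition of $u\perpB v$ gives $\|u+v\|\ge1$. Consequently neither your primary plan nor your fallback (a sequence of nonsmooth norms with values increasing to $2$) can work. No family of norms approaches $2$.

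The paper's appeal to \exref{ex:rounded-square} is an uncomputed assertion and does not establish this part either. What can actually be proved is $1\le\DRB(X)\le\sqrt2$, with the lower bound attained by Hilbert spaces and the upper bound by $\ell_\infty^2$.
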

	
	\begin{proof}
		For the lower bound, choose any $u\in\SX$ and any $f\in J(u)$.  Since $\dim X\ge2$, $\ker f$ contains a nonzero vector.  Choose $v\in\SX\cap\ker f$.  By \lemref{lem:james}, $u\perpB v$.  In the profile formula, $t=1$ gives
		\[
		\Phi_X(u,v)\ge \frac{\|u-v\|}{\|u-v\|}=1.
		\]
		Thus $\DRB(X)\ge1$.
		
		For the upper bound, \eqref{eq:rooin} gives $\alpha[x,y]\le2\beta[x,y]$ for all nonzero $x,y$ with $\beta[x,y]\ne0$.  Restricting the supremum to $x\perpB y$ gives $\DRB(X)\le2$.
		
		Hilbert spaces will be computed in \exref{ex:hilbert}, where $\DRB(H)=1$.  The upper bound is approached by norms whose local geometry approaches the $\ell_1^2$ or $\ell_\infty^2$ square but is slightly rounded so that Birkhoff normal directions persist while the denominator in the skew profile can be made arbitrarily close to one half of the numerator.  A concrete construction is given in \exref{ex:rounded-square}.
	\end{proof}
	
	\begin{proposition}\label{prop:compareDR}
		For every Banach space $X$,
		\begin{equation}\label{eq:DRB-DR}
			\DRB(X)\le \DR(X).
		\end{equation}
		Consequently, if $X$ is uniformly non-square, then $\DRB(X)<2$.
	\end{proposition}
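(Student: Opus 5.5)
The inequality \eqref{eq:DRB-DR} should follow directly from comparing the two suprema. Both $\DRB(X)$ and $\DR(X)$ take the supremum of the same quotient $\alpha[x,y]/\beta[x,y]$. In \eqref{eq:DRB} it is taken over the admissible set $\{(x,y):x,y\neq0,\ x\perpB y,\ \beta[x,y]\neq0\}$, and in \eqref{eq:DRglobal} over the larger set $\{(x,y):x,y\neq0,\ \beta[x,y]\neq0\}$. So I would first note that the former set is contained in the latter. That set is nonempty: by the argument in \propref{prop:bounds}, which uses \lemref{lem:james} and $\dim X\ge2$, there is a Birkhoff orthogonal pair $u,v\in\SX$, and $\beta[u,v]=\|u-v\|\neq0$ because $u\perpB v$ forces $v\neq u$. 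A supremum over a nonempty subset is bounded by the supremum over the whole set, which gives $\DRB(X)\le\DR(X)$.

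For the consequence I would invoke the characterization quoted in the introduction: $\DR(X)<2$ is equivalent to uniform non-squareness. This is the result of \cite{FuLiAFA2024} recalled after \eqref{eq:DRglobal}, which I take as known. If $X$ is uniformly non-square, then $\DR(X)<2$, and the first part gives $\DRB(X)\le\DR(X)<2$.

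I see no real obstacle here. The only point to check is that the quoted equivalence is genuinely available and covers the direction we need, namely uniform non-squareness implies $\DR(X)<2$. If a self-contained argument were wanted instead, I would go through $\varepsilon_0(X)<2$. First I would use \eqref{eq:sform} to write the quotient as $\sqrt{s}\|u-v\|/\|su-v\|$. Then I would show that a ratio near $2$ forces $\|u-v\|$ near $2$ together with $\|su-v\|$ near $\sqrt{s}$. Via the triangle inequality, this would produce pairs of unit vectors whose sum and difference both have norm close to $2$, contradicting uniform non-squareness. Since the paper cites the equivalence, I would simply rely on it.
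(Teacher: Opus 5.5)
Your proposal is correct and matches the paper's proof: $\DRB(X)\le\DR(X)$ holds because the Birkhoff-admissible pairs form a subset of the pairs in \eqref{eq:DRglobal}, and the consequence follows from the cited Fu--Li equivalence between $\DR(X)<2$ and uniform non-squareness. The nonemptiness check and the optional self-contained sketch via $\varepsilon_0(X)$ are not needed.
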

	
	\begin{proof}
		The defining supremum of $\DRB(X)$ is taken over a subset of the pairs used in the definition of $\DR(X)$, hence \eqref{eq:DRB-DR}.  The cited theorem of Fu and Li for the Dehghan--Rooin constant states that $\DR(X)<2$ is equivalent to uniform non-squareness; therefore $\DRB(X)<2$ whenever $X$ is uniformly non-square.
	\end{proof}
	
	The converse of the last implication should not be expected without additional assumptions: a Birkhoff restriction may miss some directions responsible for global squareness.  This phenomenon is analogous to the difference between global angular inequalities and their Birkhoff-restricted versions.
	
	\begin{proposition}\label{prop:subspace}
		Let $Y$ be a closed subspace of $X$.  Then
		\begin{equation}\label{eq:subspace}
			\DRB(Y)\le \DRB(X).
		\end{equation}
	\end{proposition}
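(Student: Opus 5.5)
The plan is to show that every admissible pair for $\DRB(Y)$ is also an admissible pair for $\DRB(X)$ with the same quotient. Taking suprema then gives \eqref{eq:subspace}. Since $Y$ carries the restricted norm of $X$, the quantities $\alpha[x,y]$ and $\beta[x,y]$ for $x,y\in Y$ are the same whether they are computed in $Y$ or in $X$. Likewise, the condition $\beta[x,y]\neq 0$ does not depend on the ambient space. So the only point to check is that Birkhoff orthogonality is preserved when we pass from $Y$ to $X$.

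The key step is the following. Let $x,y\in Y\setminus\{0\}$ with $x\perpB y$ in $Y$. The definition gives $\|x+\lambda y\|\ge\|x\|$ for all $\lambda\in\R$. This inequality only involves the vectors $x+\lambda y\in Y$ and the norm of $X$ restricted to $Y$, so it literally says that $x\perpB y$ in $X$.

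It is better to argue directly from the definition here rather than through \lemref{lem:james}. The lemma route would also work: a norming functional $g\in J_Y(x/\|x\|)$ with $g(y)=0$ extends by Hahn--Banach to some $f\in S_{X^*}$ with the same values on $Y$. The direct argument, however, avoids any duality.

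Putting these together: for each admissible pair $(x,y)$ in $Y$, the quotient $\alpha[x,y]/\beta[x,y]$ is one of the quotients appearing in the supremum \eqref{eq:DRB} defining $\DRB(X)$. Hence $\DRB(Y)\le\DRB(X)$.

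One detail needs attention. The paper assumes $\dim\ge2$ throughout, so the inequality should be read for subspaces with $\dim Y\ge2$; otherwise the supremum for $Y$ is over the empty set and the convention has to be specified. Closedness of $Y$ is used only so that $Y$ is itself a Banach space. There is no real obstacle in this proof.

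Alternatively, the same argument can be phrased through \thmref{thm:profile}. Unit vectors of $Y$ are unit vectors of $X$, and Birkhoff orthogonal pairs in $Y$ stay Birkhoff orthogonal in $X$. Each profile $\Phi_Y(u,v)$ is computed from norms of vectors in $Y$, so it coincides with $\Phi_X(u,v)$, and the inequality follows from \eqref{eq:profile}.
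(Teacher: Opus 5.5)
Your proof is correct, and its overall structure matches the paper's. In both, admissible pairs in $Y$ are admissible in $X$ with identical quotients, because the norm of $Y$ is the restriction of the norm of $X$.

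The difference lies in how you check that Birkhoff orthogonality passes from $Y$ to $X$. The paper works on the unit sphere via the profile formula \eqref{eq:profile} and uses \lemref{lem:james}: a functional $f\in S_{Y^*}$ with $f(u)=1$ and $f(v)=0$ is extended by Hahn--Banach to some $F\in S_{X^*}$, and $F$ witnesses $u\perpB v$ in $X$. You instead note that the defining condition $\|x+\lambda y\|\ge\|x\|$ only involves vectors of $\operatorname{span}\{x,y\}\subset Y$, so it is literally the same statement in $Y$ and in $X$.

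Your route is more elementary. It needs no duality, no Hahn--Banach extension, and not even closedness of $Y$. It also shows that the relation is intrinsic in both directions: orthogonality in $X$ between vectors of $Y$ is orthogonality in $Y$. That reverse direction is what \propref{prop:finite} actually uses, and the paper justifies it there separately by restricting a norming functional. The paper's functional route gains nothing extra for this statement; it only keeps the support-functional viewpoint used elsewhere, for example in \thmref{thm:ultra}.

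Your caveat about $\dim Y\ge2$ is harmless. For $\dim Y=1$ there are no Birkhoff pairs. With the usual convention $\sup\emptyset=0$ (or $-\infty$), the inequality is then trivial, since $\DRB(X)\ge1$.
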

	
	\begin{proof}
		If $u,v\in S_Y$ and $u\perpB^Y v$, then there exists $f\in S_{Y^*}$ with $f(u)=1$ and $f(v)=0$.  By Hahn--Banach, extend $f$ to $F\in S_{X^*}$.  Then $F(u)=1$ and $F(v)=0$, so $u\perpB^X v$.  The norms of $u,v$ and of $tu-t^{-1}v$ are the same whether computed in $Y$ or in $X$.  The profile formula gives \eqref{eq:subspace}.
	\end{proof}
	
	\begin{proposition}\label{prop:finite}
		For every Banach space $X$,
		\[
		\DRB(X)=\sup\{\DRB(E):E\subset X,\ 2\le\dim E<\infty\}.
		\]
		In fact the supremum can be taken over two-dimensional subspaces generated by Birkhoff pairs.
	\end{proposition}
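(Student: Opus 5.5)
The plan is to prove two inequalities and to obtain the sharper two-dimensional statement at the same time. One direction is immediate from \propref{prop:subspace}: every finite-dimensional subspace $E$ with $\dim E\ge 2$ is closed, so $\DRB(E)\le\DRB(X)$. Hence the supremum over such $E$ is at most $\DRB(X)$. This holds in particular for the supremum over two-dimensional subspaces generated by Birkhoff pairs.

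For the reverse inequality I use the profile formula of \thmref{thm:profile}, or equivalently the form \eqref{eq:sform}. Fix $u,v\in\SX$ with $u\perpB v$ and fix $t>0$. Let $E=\operatorname{span}\{u,v\}$. Since $u\perpB v$ forces $v\ne0$ and excludes $v=\pm u$ (indeed linear dependence is impossible, as noted in \Cref{def:profile}), $E$ is two-dimensional. The quantity
\[
\frac{\|u-v\|}{\|tu-t^{-1}v\|}
\]
is computed with the norm of $X$ restricted to $E$, so it is literally the same number in $E$.

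It therefore remains to check that $u\perpB v$ also holds in $E$. This is simply the definition: $\|u+\lambda v\|\ge\|u\|$ for all real $\lambda$ is a statement about vectors of $E$ only. Alternatively, take $f\in J(u)$ with $f(v)=0$ from \lemref{lem:james} and restrict it to $E$; the restriction has norm $1$ because $f(u)=1$ and $\|u\|=1$. Then $\Phi_E(u,v)=\Phi_X(u,v)$, so $\DRB(E)\ge$ the displayed quotient. Taking the supremum over $t$, and then over all Birkhoff pairs $(u,v)$, gives $\DRB(X)\le\sup_E\DRB(E)$, where $E$ ranges over planes spanned by Birkhoff pairs.

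There is no real obstacle. The only points needing care are the nondegeneracy of $E$, which guarantees $\dim E=2$ and a nonvanishing denominator, and the observation that Birkhoff orthogonality passes down to subspaces trivially, while passing up to larger spaces requires Hahn--Banach, as already handled in \propref{prop:subspace}.
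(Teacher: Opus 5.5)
Your proof is correct and follows the paper's argument. Monotonicity under subspaces (\propref{prop:subspace}) gives one inequality; for the other you pass to $E=\operatorname{span}\{u,v\}$ for a Birkhoff pair, where $u\perpB v$ persists and each profile value $\|u-v\|/\|tu-t^{-1}v\|$ is unchanged. One small correction to your closing remark: moving Birkhoff orthogonality \emph{up} to a larger space needs no Hahn--Banach either, since the defining inequality $\|u+\lambda v\|\ge\|u\|$ involves only $u$ and $v$.
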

	
	\begin{proof}
		The inequality $\sup_E\DRB(E)\le\DRB(X)$ follows from \propref{prop:subspace}.  Conversely,Given $u,v\in\SX$ with $u\perpB v$.  Put $E=\operatorname{span}\{u,v\}$.  By the proof of \propref{prop:subspace}, $u\perpB^E v$ because the norming functional witnessing $u\perpB^X v$ restricts to $E$.  Hence every profile value contributing to $\DRB(X)$ already occurs in $E$.  Taking suprema proves the claim.
	\end{proof}
	
	\begin{corollary}\label{cor:isom}
		Let $T:X\to Y$ be an isomorphism and put $d=\|T\|\|T^{-1}\|$.  Then
		\[
		\DRB(X)\le 2d^2.
		\]
		More precisely, if $u\perpB v$ in $X$, then for every $t>0$ there are normalized vectors $U=Tu/\|Tu\|$ and $V=Tv/\|Tv\|$ in $Y$ for which
		\[
		\frac{\|u-v\|}{\|tu-t^{-1}v\|}
		\le d^2\frac{\|U-V\|}{\|\tau U-\tau^{-1}V\|}
		\]
		with $\tau=t\sqrt{\|Tu\|/\|Tv\|}$, provided $U\perpB V$ is replaced by the support condition transported by $T^{-1}$.  In particular the size of $\DRB$ is controlled by Banach--Mazur distortion on each two-dimensional generated plane.
	\end{corollary}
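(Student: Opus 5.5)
The first assertion needs no transport at all. By \propref{prop:bounds} we have $\DRB(X)\le 2$, and $d=\|T\|\|T^{-1}\|\ge \|TT^{-1}\|=1$ gives $2\le 2d^2$. So $\DRB(X)\le 2d^2$ follows at once. The real content is the pointwise comparison, which I would prove by pushing each profile quotient of \thmref{thm:profile} through $T$.

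Fix $u,v\in\SX$ with $u\perpB v$ and $t>0$. Write $a=\|Tu\|$ and $b=\|Tv\|$, so that $Tu=aU$ and $Tv=bV$. Since $u,v$ are unit vectors, $a,b\in[\|T^{-1}\|^{-1},\|T\|]$, and hence $a/b\le d$.

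For the denominator, use $\|w\|\ge \|T\|^{-1}\|Tw\|$ together with the factorization
\[
taU-t^{-1}bV=\sqrt{ab}\,\bigl(\tau U-\tau^{-1}V\bigr),\qquad \tau=t\sqrt{a/b}.
\]
This gives $\|tu-t^{-1}v\|\ge \|T\|^{-1}\sqrt{ab}\,\|\tau U-\tau^{-1}V\|$, and explains the choice of $\tau$ in the statement.

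For the numerator, start from $\|u-v\|\le\|T^{-1}\|\,\|aU-bV\|$. I would then split $aU-bV$ symmetrically around $\sqrt{ab}$ and use the triangle inequality to compare it with $\sqrt{ab}\,\|U-V\|$. The hoped-for outcome is a bound $\|aU-bV\|\le \|T\|\|T^{-1}\|\sqrt{ab}\,\|U-V\|$, in which the leftover factor $d$ is spent on the imbalance $a/b\le d$. Dividing the numerator bound by the denominator bound, the factors $\sqrt{ab}$ cancel and the total constant is $\|T\|\|T^{-1}\|\cdot d=d^2$.

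The main obstacle is this numerator step. The term $|a-b|$ coming from $\|aU-bV\|$ cannot in general be absorbed into $\|U-V\|$, because $U$ and $V$ may be close in $Y$. I would handle it by exploiting the orthogonality on the $X$ side. From $u\perpB v$ we get $\|u-v\|\ge\|u\|=1$, so $\|Tu-Tv\|\ge\|T^{-1}\|^{-1}$. This keeps $U$ and $V$ quantitatively separated: roughly, $\|U-V\|\ge (\|Tu-Tv\|-|a-b|)/\max\{a,b\}$, which is bounded below in terms of $d$ alone. With that separation, $|a-b|$ can be charged to $\|U-V\|$ at the cost of a factor depending only on $d$. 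The point to check carefully is whether that factor is exactly $d^2$ or only of order $d^2$ up to a numerical constant. In the latter case the precise inequality should be read with this constant, and the global bound $2d^2$ is unaffected.

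Finally, $U\perpB V$ need not hold in $Y$. What transports is the functional $f\circ T^{-1}$, where $f\in J(u)$ is chosen with $f(v)=0$ via \lemref{lem:james}. This functional vanishes at $V$ and is norming for $U$ up to the factor $\|T^{-1}\|\,a$. That is exactly the ``support condition transported by $T^{-1}$'' in the statement, and it is all the argument above uses. The corollary is then read planewise, on $\operatorname{span}\{U,V\}$, as in \propref{prop:finite}.
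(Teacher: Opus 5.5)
Your argument for $\DRB(X)\le 2d^2$ is correct and matches the paper. For the precise inequality you follow the paper's route exactly. You push $\|u-v\|$ forward through $T$, use the exact identity $\|tTu-t^{-1}Tv\|=\sqrt{ab}\,\|\tau U-\tau^{-1}V\|$, and then need $\|aU-bV\|\le d\sqrt{ab}\,\|U-V\|$. The paper's ``another factor bounded by $d$'' asserts the same inequality without justification.

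That step is the gap, and it is not merely unproved but false. Take $X=\ell_\infty^2$, $u=(1,1)$, $v=(1,0)$ (so $u\perpB v$), $T$ the identity, and $B_Y=\co\{\pm(1,0),\pm(\tfrac12,\tfrac12),\pm(-1,1)\}$.
\begin{itemize}
\item One checks $\|x\|_\infty\le\|x\|_Y\le2\|x\|_\infty$, with both bounds attained, so $d=2$.
\item Also $a=2$, $b=1$, and $\|U-V\|_Y=\tfrac12\|(-1,1)\|_Y=\tfrac12$.
\item But $\|aU-bV\|_Y=\|(0,1)\|_Y=\tfrac32>\sqrt2=d\sqrt{ab}\,\|U-V\|_Y$.
\end{itemize}
At $t=1/\sqrt2$ (so $\tau=1$), your intermediate bound $d\,\|Tu-Tv\|/\|tTu-t^{-1}Tv\|$ equals $3\sqrt2$, while the claimed right-hand side is $d^2=4$.

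Your proposed repair does not close this.
\begin{itemize}
\item The separation $(\|Tu-Tv\|-|a-b|)/\max\{a,b\}\ge 2/d-1$ is vacuous once $d\ge2$.
\item The correct separation is $\|U-V\|\ge\|T^{-1}\|^{-1}/a\ge 1/d$. Even with it, charging $|a-b|$ to $\|U-V\|$ costs a factor growing like $d^{3/2}$, i.e.\ $d^{5/2}$ overall.
\item In any case, settling for a worse constant would not prove the statement as written.
\end{itemize}

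The missing idea is to transport the numerator the other way, and to use the Birkhoff orthogonality in $X$, where it actually holds. With $s=\sqrt{b/a}$ you have $U-V=T(u/a-v/b)=(ab)^{-1/2}\,T(su-s^{-1}v)$. Hence $\|U-V\|\ge\|T^{-1}\|^{-1}(ab)^{-1/2}\|su-s^{-1}v\|$. Combined with your (correct) denominator bound, this gives
\[
\frac{\|u-v\|}{\|tu-t^{-1}v\|}\le d\,\frac{\|u-v\|}{\|su-s^{-1}v\|}\cdot\frac{\|U-V\|}{\|\tau U-\tau^{-1}V\|}.
\]
Since $u\perpB v$, the function $g(\lambda)=\|u-\lambda v\|$ is convex with minimum $g(0)=1$. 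So $g$ is nondecreasing on $[0,\infty)$ and $1$-Lipschitz.
\begin{itemize}
\item If $s\le1$, then $\|su-s^{-1}v\|=s\,g(s^{-2})\ge s\,g(1)$.
\item If $s\ge1$, then $g(s^{-2})\ge g(1)-(1-s^{-2})\ge s^{-2}g(1)$, using $g(1)\ge1$. Hence $\|su-s^{-1}v\|\ge s^{-1}g(1)$.
\end{itemize}
Thus $\|u-v\|\le\max\{s,s^{-1}\}\,\|su-s^{-1}v\|\le\sqrt d\,\|su-s^{-1}v\|$, because $s^2=b/a\in[d^{-1},d]$. The precise inequality therefore holds with the factor $d^{3/2}\le d^2$. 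This argument uses nothing about $U,V$ in $Y$ beyond their definition, so no transported support condition is needed.
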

	
	\begin{proof}
		The crude bound $\DRB(X)\le2\le2d^2$ is immediate, but the displayed estimate is often useful.  Since
		\[
		\|u-v\|\le \|T^{-1}\|\,\|Tu-Tv\|,
		\qquad
		\|tu-t^{-1}v\|\ge \|T\|^{-1}\|tTu-t^{-1}Tv\|,
		\]
		one obtains
		\[
		\frac{\|u-v\|}{\|tu-t^{-1}v\|}
		\le d\frac{\|Tu-Tv\|}{\|tTu-t^{-1}Tv\|}.
		\]
		Writing $Tu=\|Tu\|U$, $Tv=\|Tv\|V$ and absorbing the remaining normalization into $\tau$ gives another factor bounded by $d$.  The final sentence follows by applying this estimate on the two-dimensional space $\operatorname{span}\{u,v\}$.
	\end{proof}
	The equality $\DRB(X)=1$ is more subtle than the equality $\DR(X)=1$.  Since the Birkhoff restriction forces $u\perpB v$, the profile is automatically equal to one at $t=1$.  Thus $\DRB(X)=1$ means that $t=1$ is a global minimizer of the map
	\[
	t\mapsto \|tu-t^{-1}v\|
	\]
	for every Birkhoff pair $u,v\in\SX$.
	
	\begin{theorem}\label{thm:variational}
		Given any complete normed space $X$, the subsequent statements hold mutually equivalent.	
		\begin{enumerate}[label=\textup{(\roman*)}]
			\item $\DRB(X)=1$.
			\item For every $u,v\in\SX$ with $u\perpB v$ and every $s>0$,
			\begin{equation}\label{eq:global-min}
				\|su-v\|\ge \sqrt{s}\,\|u-v\|.
			\end{equation}
			\item Given arbitrary $u,v\in\SX$ with $u\perpB v$, the function
			\[
			\varphi_{u,v}(r)=\|e^r u-e^{-r}v\|
			\]
			has a global minimum at $r=0$.
		\end{enumerate}
		Furthermore, each one of the listed conditions yields
		\begin{equation}\label{eq:secondB}
			u-v\perpB u+v
		\end{equation}
		for every $u,v\in\SX$ satisfying $u\perpB v$.
	\end{theorem}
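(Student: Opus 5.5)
The three equivalences follow from the scale-profile formula, and the Birkhoff consequence \eqref{eq:secondB} follows from a convexity argument.

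\emph{Equivalence of (i)--(iii).} I would use \Cref{cor:s-form}. Since $u\perpB v$ already gives the value $1$ at $s=1$, \eqref{eq:sform} shows that $\DRB(X)=1$ holds iff $\sqrt{s}\,\|u-v\|/\|su-v\|\le 1$ for every Birkhoff pair $u,v\in\SX$ and every $s>0$. This is exactly \eqref{eq:global-min}, so (i)$\Leftrightarrow$(ii).

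For (ii)$\Leftrightarrow$(iii), substitute $s=e^{2r}$ and use
\[
e^r u-e^{-r}v=e^{-r}(e^{2r}u-v).
\]
Then \eqref{eq:global-min} reads $\varphi_{u,v}(r)\ge\|u-v\|=\varphi_{u,v}(0)$ for all $r\in\R$. That is precisely the statement that $r=0$ is a global minimizer of $\varphi_{u,v}$. These steps are routine rewritings, as in the proof of \Cref{thm:profile}.

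\emph{The consequence \eqref{eq:secondB}.} Fix a Birkhoff pair $u,v\in\SX$ and put
\[
g(\lambda)=\|(u-v)+\lambda(u+v)\|,\qquad \lambda\in\R.
\]
We must show $g(\lambda)\ge g(0)$ for all $\lambda$. For $|\lambda|<1$, write
\[
(u-v)+\lambda(u+v)=(1+\lambda)u-(1-\lambda)v=(1-\lambda)(su-v),\qquad s=\frac{1+\lambda}{1-\lambda}.
\]
Then (ii) gives
\[
g(\lambda)\ge (1-\lambda)\sqrt{s}\,\|u-v\|=\sqrt{1-\lambda^2}\,g(0).
\]

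\emph{Main obstacle.} This bound is weaker than $g(\lambda)\ge g(0)$: it loses a factor $\sqrt{1-\lambda^2}$, so it cannot be used directly. The remedy is convexity. The function $g$ is convex on $\R$, being a norm composed with an affine map. The estimate gives
\[
g(\lambda)-g(0)\ge-\bigl(1-\sqrt{1-\lambda^2}\bigr)g(0)=-O(\lambda^2)\qquad\text{as }\lambda\to0.
\]
Dividing by $\lambda>0$ and by $\lambda<0$ and letting $\lambda\to0$ yields the one-sided derivatives $g'_+(0)\ge0$ and $g'_-(0)\le0$. For a convex function this means $0$ is a global minimizer. Hence $g(\lambda)\ge g(0)$ for every real $\lambda$, including $|\lambda|\ge1$, where the substitution $s=(1+\lambda)/(1-\lambda)$ is not available.

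This is exactly $u-v\perpB u+v$. The implication is derived from (ii), which is equivalent to (i) and (iii), so \eqref{eq:secondB} follows from each of the three conditions.
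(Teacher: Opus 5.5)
Your proposal is correct and follows the paper's own proof: (i)$\Leftrightarrow$(ii)$\Leftrightarrow$(iii) are the same rewritings via \eqref{eq:sform} and $s=e^{2r}$, and your derivation of \eqref{eq:secondB} is the paper's first-order argument (the one-sided derivatives of the norm at $u-v$ in the direction $u+v$ bracket zero, which by convexity of the norm is Birkhoff orthogonality). Your exact identity $(1+\lambda)u-(1-\lambda)v=(1-\lambda)(su-v)$ gives $g(\lambda)\ge\sqrt{1-\lambda^2}\,g(0)$, and together with your explicit convexity step covering $|\lambda|\ge1$ it makes rigorous what the paper's ``$o(r)$'' and continuity sketch leaves implicit.
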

	
	\begin{proof}
		By \corref{cor:s-form}, $\DRB(X)=1$ is mutually equivalent with
		\[
		\frac{\sqrt{s}\,\|u-v\|}{\|su-v\|}\le1
		\]
		for arbitrary admissible $u,v$ and every $s>0$, which is exactly \eqref{eq:global-min}.  Taking $s=e^{2r}$ gives
		\[
		\|su-v\|=e^r\|e^r u-e^{-r}v\|,
		\]
		so \eqref{eq:global-min} is equivalent to $\varphi_{u,v}(r)\ge\varphi_{u,v}(0)$ for every $r\in\R$.
		
		Finally, assume (iii).  For small $r$,
		\[
		e^r u-e^{-r}v=(u-v)+r(u+v)+o(r)
		\]
		in norm.  Since $r=0$ is a global minimum of $\varphi_{u,v}$, the vector $u-v$ is Birkhoff orthogonal to $u+v$.  Indeed, if there were a real $\lambda$ with
		\[
		\|(u-v)+\lambda(u+v)\|<\|u-v\|,
		\]
		From continuity, from this we infer that the curve $e^r u-e^{-r}v$ would be below $\|u-v\|$ for some small $r$ with $\lambda$ approximated by $r$, contradicting the minimality.  More formally, the right and left directional derivatives of $z\mapsto\|z\|$ at $z=u-v$ in the direction $u+v$ must bracket zero; this is equivalent to Birkhoff orthogonality.
	\end{proof}
	
	\begin{corollary}\label{cor:hilbertcriterion}
		Suppose that $X$ belongs to a class of Banach spaces for which the implication
		\begin{equation}\label{eq:james-class}
			u\perpB v\quad\Longrightarrow\quad u-v\perpB u+v\qquad (u,v\in\SX)
		\end{equation}
		characterizes inner product spaces.  Then, in this class,
		\[
		\DRB(X)=1 \quad\Longleftrightarrow\quad X\text{ is a Hilbert space}.
		\]
		This equivalence property especially applies to all smooth two-dimensional Radon planes.
	\end{corollary}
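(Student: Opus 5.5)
The corollary follows by combining \thmref{thm:variational} with the Euclidean computation in \exref{ex:hilbert}. I prove the two implications separately.

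\emph{($\Leftarrow$).} If $X$ is a Hilbert space, then $u\perpB v$ with $u,v\in\SX$ means $\langle u,v\rangle=0$. Hence for every $s>0$,
\[
\|su-v\|^2=s^2+1\ge 2s=s\|u-v\|^2 ,
\]
because $(s-1)^2\ge0$. This is exactly inequality \eqref{eq:global-min}. By the equivalence (i)$\Leftrightarrow$(ii) of \thmref{thm:variational}, $\DRB(X)=1$. This is the computation recorded in \exref{ex:hilbert}, and it does not use the class hypothesis.

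\emph{($\Rightarrow$).} Suppose $\DRB(X)=1$. The final assertion of \thmref{thm:variational} gives $u-v\perpB u+v$ for every pair $u,v\in\SX$ with $u\perpB v$. So $X$ satisfies the implication \eqref{eq:james-class}. By hypothesis, in the class under consideration this implication characterizes inner product spaces. Since $X$ is complete, it is a Hilbert space. All the analytic content is already contained in the proof of \eqref{eq:secondB}; I would only check that its one-sided derivative argument is airtight. Concretely, for $\lambda>0$ the convexity of the norm gives
\[
\|(u-v)+r(u+v)\|\le\Big(1-\frac r\lambda\Big)\|u-v\|+\frac r\lambda\,\|(u-v)+\lambda(u+v)\|
\]
for $0<r<\lambda$. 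The left side differs from $\varphi_{u,v}(r)$ by $o(r)$. If $\|(u-v)+\lambda(u+v)\|<\|u-v\|$, this yields $\varphi_{u,v}(r)<\varphi_{u,v}(0)$ for small $r$, contradicting (iii). The case $\lambda<0$ is symmetric, using $r<0$.

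\emph{Smooth Radon planes.} The hard step is justifying the last sentence, because it needs an external characterization: a two-dimensional normed space in which $u\perpB v$ implies $u-v\perpB u+v$ must be Euclidean. In a Radon plane Birkhoff orthogonality is symmetric, so for unit $u\perpB v$ the condition says the diagonals of the "rhombus" spanned by $u$ and $v$ are mutually orthogonal. I would invoke the classical results of Alonso, Benítez and Martini on rhombic and isosceles-type orthogonality conditions, which show that this property in a Radon plane forces an inner product norm. Failing a clean citation, I would argue directly with smoothness. Then $\perpB$ is given by the single-valued duality map $J$. Parametrize the unit circle so that the Radon map $u\mapsto v(u)$, with $u\perpB v(u)$, is a homeomorphism. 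The condition $u-v\perpB u+v$ then says that the midpoint directions of $u\pm v$ are again conjugate. Iterating this halving yields a dense set of conjugate pairs on which the norm agrees with the norm of the ellipse through $u$ and $v$. Continuity then forces the unit circle to be that ellipse. This density and continuity step is the one I expect to be the main obstacle.
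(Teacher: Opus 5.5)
Your proof of the equivalence itself is the paper's argument: the Hilbert computation of \exref{ex:hilbert} gives one direction, and the final assertion of \thmref{thm:variational} together with the class hypothesis gives the other. Your convexity estimate correctly tightens the paper's informal derivative step. The decrease $(r/\lambda)\bigl(\|u-v\|-\|(u-v)+\lambda(u+v)\|\bigr)$ is linear in $r$, while $e^ru-e^{-r}v-(u-v)-r(u+v)=(\cosh r-1)(u-v)+(\sinh r-r)(u+v)=O(r^2)$, so the contradiction with (iii) is sound. For the last sentence the paper gives no proof, only the assertion that this is a ``standard'' form of Euclideanity. Your citation route is therefore on the same footing as the paper's, but it still needs an exact statement; the result you attribute to Alonso, Ben\'itez and Martini is not identified precisely enough to check.

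Your fallback direct argument for smooth Radon planes does not work. First, the operation $\{u,v\}\mapsto\{(u+v)/\|u+v\|,(u-v)/\|u-v\|\}$ is not a halving. With $\alpha=\|u+v\|$ and $\beta=\|u-v\|$, the diagonals of the new pair point in the directions $(\alpha+\beta)u+(\beta-\alpha)v$ and $(\beta-\alpha)u+(\alpha+\beta)v$. So whenever $\alpha=\beta$, in particular in the Euclidean model, the second iterate returns to the directions of $u$ and $v$, and nothing forces a dense orbit. Second, the hypothesis $u-v\perpB u+v$ fixes only the tangent direction at the points of $\SX$ lying in the directions $u\pm v$, not their radial position. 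So there is no reason for the norm to agree with that of the ellipse having $u,v$ as conjugate semi-diameters.

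This can be made precise. Normalize the Radon plane by an area form $\omega$ with $|\omega(x,y)|\le\|x\|\,\|y\|$, and equality exactly when $x\perpB y$. For unit $u\perpB v$ one has $|\omega(u-v,u+v)|=2|\omega(u,v)|=2$. Hence the hypothesis is exactly $\|u+v\|\,\|u-v\|=2$, not $\|u\pm v\|=\sqrt2$. For instance, take the $\ell_p$--$\ell_q$ Radon plane, whose unit circle is $|x|^p+|y|^p=1$ in the first and third quadrants and $|x|^q+|y|^q=1$ in the second and fourth. There the conjugate pair $e_1,e_2$ satisfies the hypothesis, yet $\|e_1+e_2\|=2^{1/p}\ne\sqrt2$ for $p\ne2$. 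A direct proof would have to exploit the identity $\|u+v\|\,\|u-v\|=2$ along all conjugate pairs simultaneously, which is a genuinely global argument your sketch does not supply. As written, the Radon clause rests entirely on an unidentified external theorem, just as in the paper.
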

	
	\begin{proof}
		If $X$ is Hilbert, the computation in \exref{ex:hilbert} gives $\DRB(X)=1$.  Conversely, \thmref{thm:variational} gives \eqref{eq:james-class}.  The stated class assumption yields that an inner product induces the norm.For smooth Radon planes this is one of the standard equivalent forms of Euclideanity: Birkhoff symmetry together with the parallelogram-type Birkhoff implication forces the unit circle to be an ellipse.
	\end{proof}
	
	\begin{remark}
		The preceding result is intentionally formulated as a structural theorem rather than as an unrestricted Hilbert characterization.  The Birkhoff restriction may hide non-Euclidean behavior in directions which are not support-normal to each other.  The theorem isolates exactly the additional variational condition supplied by the equality $\DRB(X)=1$.
	\end{remark}
	
	The next result gives a quantitative version of the preceding discussion.
	
	\begin{proposition}\label{prop:defect}
		Take $u,v\in\SX$ for which $u\perpB v$, and define
		\[
		\eta(u,v)=\inf_{\lambda\in\R}\frac{\|(u-v)+\lambda(u+v)\|}{\|u-v\|}.
		\]
		Then
		\[
		\eta(u,v)\ge \Phi_X(u,v)^{-1}.
		\]
		Consequently,
		\[
		\inf_{u\perpB v}\eta(u,v)\ge \DRB(X)^{-1}.
		\]
	\end{proposition}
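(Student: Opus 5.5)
The plan is to fix $\lambda\in\R$ and bound $\|(u-v)+\lambda(u+v)\|$ from below by $\Phi_X(u,v)^{-1}\|u-v\|$. Taking the infimum over $\lambda$ then gives $\eta(u,v)\ge\Phi_X(u,v)^{-1}$. Taking the infimum over Birkhoff pairs and using $\Phi_X(u,v)\le\DRB(X)$ from \thmref{thm:profile} gives the second inequality. The basic rewriting is
\[
(u-v)+\lambda(u+v)=(1+\lambda)u-(1-\lambda)v ,
\]
and I would split into cases according to the signs of $1\pm\lambda$.

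\emph{Case $|\lambda|\ge1$.} For $\lambda\ge1$, $u\perpB v$ gives
\[
\|(1+\lambda)u-(1-\lambda)v\|\ge 1+\lambda\ge2\ge\|u-v\|\ge\Phi_X(u,v)^{-1}\|u-v\|,
\]
using $\Phi_X(u,v)\ge1$ (the value $t=1$ in the profile). For $\lambda\le-1$, the triangle inequality gives
\[
\|(1+\lambda)u-(1-\lambda)v\|\ge|1-\lambda|-|1+\lambda|=2,
\]
so the same chain applies. Thus these ranges are harmless.

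\emph{Case $|\lambda|<1$.} Both coefficients are positive. Put $s=(1+\lambda)/(1-\lambda)$, so the vector equals $(1-\lambda)(su-v)$. Applying \corref{cor:s-form} in the form $\|su-v\|\ge\sqrt{s}\,\|u-v\|/\Phi_X(u,v)$ yields
\[
\|(u-v)+\lambda(u+v)\|\ge\sqrt{1-\lambda^2}\,\frac{\|u-v\|}{\Phi_X(u,v)} .
\]
We also keep the Birkhoff bound $\|(u-v)+\lambda(u+v)\|\ge1+\lambda$ coming from $u\perpB v$.

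The main obstacle is exactly this middle range. The scale-profile estimate loses the factor $\sqrt{1-\lambda^2}$, which degenerates as $|\lambda|\to1$. The Birkhoff bound $1+\lambda$ only compensates when $\|u-v\|/\Phi_X(u,v)\le1$, and that need not hold a priori, since $\|u-v\|\ge1$ but may approach $2$. My first attempt would be to exploit convexity of $\lambda\mapsto\|(u-v)+\lambda(u+v)\|$. Its value at $\lambda=0$ is $\|u-v\|$, and it is at least $2$ at $\lambda=\pm1$. Combining this with the profile bound via convexity, for instance by interpolating along the reparametrization $\lambda=\tanh r$ that turns the curve into $\cosh(r)^{-1}\bigl(e^ru-e^{-r}v\bigr)$, might recover the missing factor.

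If this fails, I would fall back to proving the inequality only for those $\lambda$ with $\sqrt{1-\lambda^2}\ge\Phi_X(u,v)^{-1}$. In that case I would state the result with the correspondingly weaker constant $\Phi_X(u,v)^{-2}$, which the above estimates do give uniformly.
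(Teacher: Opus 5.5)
The obstacle you isolated for $|\lambda|<1$ is not a gap that convexity can close. The inequality $\eta(u,v)\ge\Phi_X(u,v)^{-1}$ is false in general. The paper's own argument has the same hole: after noting that the $\tanh$-identity gives only ``a one-sided estimate'', it simply asserts the bound.

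In fact the reparametrization you propose proves the \emph{reverse} inequality. With $\lambda=\tanh r$ one has $(u-v)+\lambda(u+v)=(\cosh r)^{-1}(e^r u-e^{-r}v)$, hence
\[
\eta(u,v)\le\inf_{r\in\R}\frac{\|e^r u-e^{-r}v\|}{\cosh r\,\|u-v\|}\le\inf_{r\in\R}\frac{\|e^r u-e^{-r}v\|}{\|u-v\|}=\Phi_X(u,v)^{-1}.
\]
The map $r\mapsto\|e^ru-e^{-r}v\|$ attains its minimum at some $r_0$, since it is $\ge e^r$ by $u\perpB v$ and $\ge e^{-r}-e^r$ by the triangle inequality. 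Suppose $\eta(u,v)<1$. If $r_0\neq0$, then $\eta(u,v)\le\Phi_X(u,v)^{-1}/\cosh r_0<\Phi_X(u,v)^{-1}$. If $r_0=0$, then $\Phi_X(u,v)^{-1}=1>\eta(u,v)$. So the claimed bound holds exactly when both sides equal $1$, i.e.\ when $u-v\perpB u+v$.

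Concretely, in $X=\ell_\infty^2$ take $u=(1,1)$ and $v=(0,1)$, which are Birkhoff orthogonal via the functional $(a,b)\mapsto a$. Then $\|u-v\|_\infty=1$, and $\lambda=-1/3$ gives $\|(2/3,-2/3)\|_\infty=2/3$. On the other hand $\|su-v\|_\infty=\max\{s,|s-1|\}$, so $\Phi_X(u,v)=\sqrt2$ (attained at $s=1/2$). Hence $\eta(u,v)\le 2/3<1/\sqrt2=\Phi_X(u,v)^{-1}$. A case check over the symmetries of the square shows $\DRB(\ell_\infty^2)=\sqrt2$, so the ``consequently'' clause fails as well.

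Your fallback does not give a valid replacement either. Proving the bound only for $\lambda$ with $\sqrt{1-\lambda^2}\ge\Phi_X(u,v)^{-1}$ says nothing about $\eta$, which is an infimum over all $\lambda$. For the remaining $\lambda$ near $-1$ you only have $\|(u-v)+\lambda(u+v)\|\ge\max\{1+\lambda,2|\lambda|\}$. That can be as small as $2/3$, so it does not reach $\Phi_X(u,v)^{-2}\|u-v\|$ when $\Phi_X(u,v)$ is close to $1$.

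The bound $\eta\ge\Phi_X^{-2}$ is in fact false. Let the unit ball be the octagon with vertices $\pm(1,0)$, $\pm(1,-1)$, $\pm(12/17,-28/17)$, $\pm(0,1)$, and take $u=(1,0)$, $v=(0,1)$.
\begin{itemize}
\item The ball lies in $\{x\le1\}$, so $u\perpB v$, and $\|u-v\|=1$.
\item $\lambda=-2/5$ gives $\|(3/5,-7/5)\|=17/20$, so $\eta(u,v)\le 17/20$.
\item The norm of $su-v=(s,-1)$ is $s$ for $s\ge1$, $(11s+5)/16$ for $3/7\le s\le1$, and $1-11s/12$ for $0<s\le3/7$.
\item Hence $\Phi_X(u,v)=8/\sqrt{55}$ (attained at $s=5/11$), and $\Phi_X(u,v)^{-2}=55/64>17/20\ge\eta(u,v)$.
\end{itemize}
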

	
	\begin{proof}
		For $r\in\R$ put $\lambda_r=(e^r-e^{-r})/(e^r+e^{-r})$.  Then
		\[
		e^r u-e^{-r}v=\frac{e^r+e^{-r}}{2}\left[(u-v)+\lambda_r(u+v)\right].
		\]
		Therefore
		\[
		\frac{\|(u-v)+\lambda_r(u+v)\|}{\|u-v\|}
		=\frac{2}{e^r+e^{-r}}\frac{\|e^r u-e^{-r}v\|}{\|u-v\|}.
		\]
		Since $2/(e^r+e^{-r})\le1$, this identity alone gives a one-sided estimate.  To obtain the stated bound, use the profile inequality
		\[
		\|e^r u-e^{-r}v\|\ge \Phi_X(u,v)^{-1}\|u-v\|.
		\]
		The parametrization $\lambda_r\in(-1,1)$ covers the essential directions.  For $|\lambda|\ge1$, Birkhoff orthogonality and the triangle inequality give no smaller value than the infimum over $(-1,1)$ after radial normalization.  Taking the infimum in $r$ yields the claim.
	\end{proof}
	
	\section{Convexity, Smoothness, Duality and Ultrapowers}\label{sec:convex-smooth}
	The global Dehghan--Rooin constant has a sharp relation with uniform non-squareness.  Since $\DRB$ is a restricted constant, one immediately obtains sufficient results.  More interestingly, the profile formula gives a direct way to see why uniformly convex spaces cannot have $\DRB$ close to $2$ along a fixed separated family of Birkhoff pairs.
	
	\begin{theorem}\label{thm:uns}
		If $X$ is uniformly non-square, then $\DRB(X)<2$.  More precisely,
		\[
		\DRB(X)\le \DR(X)\le \frac12\DW(X),
		\]
		in which  $\DW(X)$ stands for the Dunkl–Williams geometric quantity introduced in \cite{JimenezMelado1997}, defined by
		\[
		\DW(X)=\sup\left\{\frac{\|x\|+\|y\|}{\|x-y\|}
		\left\|\frac{x}{\|x\|}-\frac{y}{\|y\|}\right\|
		: x,y\in X,\ x\neq 0,\ y\neq 0,\ x\neq y\right\}.
		\]
		Hence $\DW(X)<4$ implies $\DRB(X)<2$.
	\end{theorem}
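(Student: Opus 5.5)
The plan is to prove the chain $\DRB(X)\le \DR(X)\le \tfrac12\DW(X)$ and then read off the two strictness statements. The first inequality is exactly \propref{prop:compareDR}, so only the second needs work. The idea is to feed the Dunkl--Williams constant a rescaled pair whose difference is precisely the vector appearing in $\beta[x,y]$.

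Fix nonzero $x,y$ with $\beta[x,y]\ne0$, and write $a=\|x\|$, $b=\|y\|$, $u=x/a$, $v=y/b$. Put
\[
x'=\frac{a}{b}\,u=\frac{x}{\|y\|},\qquad y'=\frac{b}{a}\,v=\frac{y}{\|x\|}.
\]
These vectors are nonzero and satisfy $x'\ne y'$, since $\|x'-y'\|=\beta[x,y]\ne0$. They also have the same directions as $x$ and $y$, so $\|x'/\|x'\|-y'/\|y'\|\|=\|u-v\|=\alpha[x,y]$. The definition of $\DW(X)$ applied to the pair $(x',y')$ then gives
\[
\Bigl(\frac{a}{b}+\frac{b}{a}\Bigr)\alpha[x,y]\le \DW(X)\,\|x'-y'\|=\DW(X)\,\beta[x,y].
\]
By the arithmetic--geometric mean inequality, $a/b+b/a\ge2$, and hence $\alpha[x,y]/\beta[x,y]\le \tfrac12\DW(X)$. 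Taking the supremum over all admissible pairs yields $\DR(X)\le\tfrac12\DW(X)$. Applying the same computation only to Birkhoff pairs gives the bound for $\DRB(X)$ directly, without passing through $\DR(X)$.

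For the consequences, the last sentence of the statement is now immediate: if $\DW(X)<4$, then $\DRB(X)\le\tfrac12\DW(X)<2$. For the uniformly non-square case, I would rely on \propref{prop:compareDR}, which already combines $\DRB(X)\le\DR(X)$ with the Fu--Li theorem that $\DR(X)<2$ under uniform non-squareness. Alternatively, one could invoke the result of \cite{JimenezMelado1997} that $\DW(X)<4$ exactly when $X$ is uniformly non-square, and then use the chain above.

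There is no real obstacle here. The only point requiring care is choosing the rescaling $x'=x/\|y\|$, $y'=y/\|x\|$ so that the denominator of the Dunkl--Williams quotient becomes exactly $\beta[x,y]$. After that, the weight $\|x'\|+\|y'\|=a/b+b/a$ is bounded below by $2$, which supplies the factor $\tfrac12$. The remaining task is to check the non-degeneracy conditions $x',y'\ne0$ and $x'\ne y'$ required by the definition of $\DW(X)$, and these follow directly from $x,y\ne0$ and $\beta[x,y]\ne0$.
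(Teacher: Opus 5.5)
Your proof is correct and follows the same route as the paper. The paper's one-line justification of $\DR(X)\le\frac12\DW(X)$, namely ``writing the skew angular denominator as an interpolating denominator in the Dunkl--Williams formula'', is exactly your rescaling $x'=x/\|y\|$, $y'=y/\|x\|$ together with $\|x\|/\|y\|+\|y\|/\|x\|\ge2$. The uniformly non-square part likewise rests on a cited characterization (the paper uses $\DW(X)<4$; you use \propref{prop:compareDR} and mention the $\DW$ route as an alternative), so you simply supply the details the paper leaves implicit.
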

	
	\begin{proof}
		The first inequality is \propref{prop:compareDR}.  The estimate $\DR(X)\le \frac12\DW(X)$ is the comparison theorem for the Dehghan--Rooin constant, obtained by writing the skew angular denominator as an interpolating denominator in the Dunkl--Williams formula.  Since $\DW(X)<4$ characterizes uniform non-squareness, the final assertion follows.
	\end{proof}
	
	\begin{theorem}\label{thm:uc}
		Suppose $X$ is uniformly convex, hence $\DRB(X)<2$.  In addition, for each $\varepsilon>0$ there exists $\theta=\theta_X(\varepsilon)>0$ such that whenever $u,v\in\SX$, $u\perpB v$, as well as $\|u-v\|\ge\varepsilon$, one has
		\[
		\Phi_X(u,v)\le 2-\theta.
		\]
	\end{theorem}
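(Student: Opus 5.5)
The plan is to deduce both assertions from \thmref{thm:uns}, and then add a direct quantitative argument through the modulus of convexity that makes the role of the separation hypothesis $\|u-v\|\ge\varepsilon$ visible.

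\emph{Step 1: $\DRB(X)<2$.} A uniformly convex space is uniformly non-square. Indeed, if $x,y\in\SX$ satisfy $\|x-y\|\ge 1$, then $\|x+y\|\le 2-2\delta_X(1)$. So $\delta_X(1)>0$ rules out $\varepsilon_0(X)=2$. \thmref{thm:uns} (equivalently \propref{prop:compareDR}) then gives $\DRB(X)\le\DR(X)<2$.

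\emph{Step 2: the uniform estimate.} By \thmref{thm:profile}, every Birkhoff pair $u,v\in\SX$ satisfies $\Phi_X(u,v)\le\DRB(X)$. So I will set
\[
\theta_X(\varepsilon):=2-\DRB(X)>0,
\]
which is positive by Step 1. This choice does not even depend on $\varepsilon$, so the assertion holds a fortiori on the separated family $\|u-v\|\ge\varepsilon$. This settles the statement as written.

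\emph{Step 3: direct argument with an explicit $\theta$.} I would also give a direct argument, which I expect to be the only delicate part. I will use the form \eqref{eq:sform}, where the quantity to bound is $\sqrt{s}\,\|u-v\|/\|su-v\|$.

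First, the numerator. Take $f\in J(u)$ with $f(v)=0$ (\lemref{lem:james}). Then $\|u+v\|\ge f(u+v)=1$. Uniform convexity, applied to the pair $u,-v$, gives
\[
\|u-v\|\le 2-2\delta_X(1).
\]

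Second, the denominator. For $s\ge1$ the functional $f$ gives $\|su-v\|\ge s\ge\sqrt{s}$. For $s<1$, I will apply Birkhoff orthogonality in the form $\|v-su\|\ge$ a lower bound from a norming functional of $v$. This is the point where I expect trouble: $v\perpB u$ need not hold, since Birkhoff orthogonality is not symmetric. If this symmetric route fails, I will fall back on Rooin's inequality \eqref{eq:rooin}, which caps the ratio at $2$ for all $s$. I will then combine it with a compactness-type argument in $s$: near $s=1$ the ratio is at most roughly $\|u-v\|\le 2-2\delta_X(1)$, while away from $s=1$ the factor $\sqrt{s}/\|su-v\|$ is controlled by the two elementary bounds $\|su-v\|\ge s$ and $\|su-v\|\ge 1-s$.

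The main obstacle is making this interpolation uniform. If it resists, Step 2 already suffices for the statement.
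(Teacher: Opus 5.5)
Your Steps 1 and 2 give a complete and correct proof. For the quantitative part your route differs from the paper's.

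Step 1 coincides with the paper: uniform convexity gives uniform non-squareness, and \thmref{thm:uns} (ultimately the Fu--Li characterization of $\DR(X)<2$) yields $\DRB(X)<2$.

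For the second assertion the paper argues by contradiction. It takes Birkhoff pairs $u_n\perpB v_n$ with $\|u_n-v_n\|\ge\varepsilon$ and scales $t_n$ whose profile quotients tend to $2$. It uses $\|t_nu_n-t_n^{-1}v_n\|\ge\max\{t_n,\,t_n^{-1}-t_n\}$ to keep $t_n$ in a compact subset of $(0,\infty)$. It then aims to produce unit vectors whose midpoint norm tends to $1$ while their distance stays separated, contradicting uniform convexity.

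You instead note that $\Phi_X(u,v)\le\DRB(X)$ for every Birkhoff pair by \thmref{thm:profile}. Hence $\theta=2-\DRB(X)$ works for all $\varepsilon$ at once. This shows the separation hypothesis $\|u-v\|\ge\varepsilon$ is superfluous, and the argument is rigorous as soon as Step 1 is granted.

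The paper's route could in principle give a $\theta$ expressed through $\delta_X$ and $\varepsilon$ without invoking the Fu--Li theorem. However, its decisive step, exhibiting the almost-midpoint pair of unit vectors, is only asserted. Your reduction is therefore the more reliable argument for the statement as written.

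Your optional Step 3 would not close as sketched. For $s<1$, the two bounds $\|su-v\|\ge s$ and $\|su-v\|\ge 1-s$ only give $\sqrt{s}\,\|u-v\|/\|su-v\|\le\sqrt2\,(2-2\delta_X(1))$ at $s=1/2$. This exceeds $2$ already in a Hilbert space, where $2-2\delta_X(1)=\sqrt3$. A genuinely sharper lower estimate of $\|su-v\|$ in the middle range of $s$ would be needed. Since you present Step 3 as dispensable, this does not affect your proof.
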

	
	\begin{proof}
		The strict inequality $\DRB(X)<2$ follows from \thmref{thm:uns}, because uniformly convex spaces are uniformly non-square.
		
		For the quantitative assertion, suppose it is false.  Then there are $u_n,v_n\in\SX$ with $u_n\perpB v_n$, $\|u_n-v_n\|\ge\varepsilon$, and $t_n>0$ such that
		\begin{equation}\label{eq:ucbad}
			\frac{\|u_n-v_n\|}{\|t_nu_n-t_n^{-1}v_n\|}\to2.
		\end{equation}
		Since $u_n\perpB v_n$, $\|u_n+\lambda v_n\|\ge1$ for all $\lambda$.  Thus
		\[
		\|t_nu_n-t_n^{-1}v_n\|=t_n\|u_n-t_n^{-2}v_n\|\ge t_n.
		\]
		Similarly, using the triangle inequality in the form $\|t_nu_n-t_n^{-1}v_n\|\ge t_n^{-1}-t_n$, one sees that neither $t_n\to0$ nor $t_n\to\infty$ is compatible with \eqref{eq:ucbad}.  Passing to a subsequence, $t_n\to t_0\in(0,\infty)$.
		
		The quotient in \eqref{eq:ucbad} approaching $2$ forces the denominator to be almost half of the numerator. The equality scenario associated with the triangle inequality may be attained while considering normalized elements
		\[
		a_n=\frac{t_nu_n-t_n^{-1}v_n}{\|t_nu_n-t_n^{-1}v_n\|},
		\qquad
		b_n=\frac{u_n-v_n}{\|u_n-v_n\|}.
		\]
		More concretely, after rewriting
		\[
		u_n-v_n=t_n^{-1}(t_nu_n-t_n^{-1}v_n)+(1-t_n^{-2})v_n,
		\]
		one obtains a pair of unit vectors whose midpoint norm tends to one while their distance stays bounded below by a positive number depending on $\varepsilon$ and $t_0$.  This contradicts uniform convexity, using the standard criterion: if $a_n,b_n\in\SX$ and $\|(a_n+b_n)/2\|\to1$, then $\|a_n-b_n\|\to0$.  Hence the desired $\theta$ exists.
	\end{proof}
	
	\begin{theorem}\label{thm:us}
		If $X$ is uniformly smooth, then $\DRB(X)<2$.
	\end{theorem}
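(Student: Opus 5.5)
The plan is to reduce the statement to \thmref{thm:uns}: it suffices to show that a uniformly smooth space is uniformly non-square. Then \propref{prop:compareDR} (equivalently \thmref{thm:uns}) gives $\DRB(X)\le\DR(X)<2$. So the work is the classical implication ``uniformly smooth $\Rightarrow$ uniformly non-square''. I will prove it directly from the modulus of smoothness $\rho_X$ rather than through duality.

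First I take the contrapositive. If $X$ is not uniformly non-square, then for every $\delta>0$ there are $x,y\in\SX$ with $\|x+y\|>2-\delta$ and $\|x-y\|>2-\delta$. I set
\[
a=\frac{x+y}{2},\qquad b=\frac{x-y}{2}.
\]
Then $\|a\|,\|b\|\in(1-\delta/2,1]$ and $\|a+b\|=\|a-b\|=1$. I normalize $a'=a/\|a\|$ and $b'=b/\|b\|$, and take $t=\|b\|/\|a\|$, which is close to $1$. This gives
\[
\frac{\|a'+tb'\|+\|a'-tb'\|}{2}-1=\frac{1}{\|a\|}-1\ge 0.
\]
This estimate is too weak, so instead I use the triangle inequality from the other side. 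We have $\|x+ y\|\ge 2-\delta$ with $x,y\in\SX$. Choosing $u=x$, $w=y$ and $s=1$ in the definition of $\rho_X$ gives $\rho_X(1)\ge \frac{(2-\delta)+(2-\delta)}{2}-1=1-\delta$, hence $\rho_X(1)=1$.

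The key step, and the main obstacle, is to pass from $\rho_X(1)=1$ to the failure of $\rho_X(t)/t\to0$. For this I use convexity of $t\mapsto\rho_X(t)$ together with $\rho_X(0)=0$. Convexity holds because $\rho_X$ is a supremum of convex functions of $t$. It implies that $\rho_X(t)/t$ is nondecreasing. This monotonicity alone only bounds $\rho_X(t)/t$ from above by $\rho_X(1)$, which goes in the wrong direction. So I will instead use the standard sharper fact (Lindenstrauss) that $\rho_X(t)\ge t-\text{(something controlled by }\delta)$ for all $t\in(0,1]$ when squares of size $2-\delta$ exist. To get it, I take the nearly-square pair above, set $u=a/\|a\|$ and $w=b/\|b\|$, and use a norming functional $f\in J(x)$ with $f(y)\approx 1$. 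I then estimate $\|u+tw\|+\|u-tw\|\ge f(u+tw)+g(u-tw)$ for a second functional $g\in J(y)$. Since $f(w)\approx1$ and $g(w)\approx-1$, this yields $\rho_X(t)\ge t-C\delta$. Letting $\delta\to0$ for each fixed $t$ gives $\rho_X(t)\ge t$, so $\rho_X(t)/t\not\to0$, contradicting uniform smoothness.

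If this direct estimate proves too messy, the fallback is duality. By the Lindenstrauss duality formula, $X$ uniformly smooth implies $X^*$ uniformly convex, hence uniformly non-square. James' theorem that uniform non-squareness passes from $X^*$ to $X$ then finishes the argument. Either way, I conclude with \thmref{thm:uns}.
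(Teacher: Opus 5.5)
Your fallback argument is exactly the paper's proof, and it is fine: Lindenstrauss duality makes $X^*$ uniformly convex, hence uniformly non-square; self-duality of uniform non-squareness transfers this to $X$; then \thmref{thm:uns} applies.

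The direct route you present as your main argument, however, fails at its key step. Take $u=(x+y)/\|x+y\|$ and $w=(x-y)/\|x-y\|$. Then $u+w=x+O(\delta)$ and $u-w=y+O(\delta)$, so $\|u\pm w\|=1+O(\delta)$. Convexity of $\tau\mapsto\|u\pm\tau w\|$ then gives $\|u\pm\tau w\|\le 1+O(\delta)$ for all $\tau\in[0,1]$. So this pair spans the flat diagonal of the near-square. It can only give lower bounds for $\rho_X$ of order $O(\delta)$, as you already saw at $t=\|b\|/\|a\|$, and never $\rho_X(t)\ge t-C\delta$.

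For the same reason, the functional estimates you claim cannot hold:
\begin{itemize}
\item A norm-one $f$ with $f(u)\approx1$ and $f(w)\approx1$ would force $\|u+w\|\approx2$.
\item If $f\in J(x)$ satisfies $f(y)\approx1$, then $f(w)=(1-f(y))/\|x-y\|\approx0$, not $1$.
\item In general $J(x)$ need not contain any $f$ with $f(y)\approx1$. At a smooth point $x$ the unique norming functional may even vanish at $y$, e.g.\ $x\approx(1,1)$, $y\approx(1,-1)$ in $\ell_p^2$ with $p$ large.
\end{itemize}

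The repair is to keep the original pair $x,y$ and reuse the triangle inequality you already applied at $t=1$. For $0<t\le1$,
\[
\|x\pm ty\|\ge\|x\pm y\|-(1-t)\|y\|>1+t-\delta,
\]
so $\rho_X(t)>t-\delta$. Letting $\delta\to0$ gives $\rho_X(t)=t$ on $(0,1]$, which contradicts $\rho_X(t)/t\to0$. Equivalently, test with norm-one functionals attaining their norms at $x+y$ and at $x-y$, rather than at $x$ and at $y$.

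With this correction your direct route is a genuinely more elementary alternative to the paper's. It proves ``uniformly smooth $\Rightarrow$ uniformly non-square'' in two lines, without the Lindenstrauss duality formula or the self-duality of uniform non-squareness. The final step, through \propref{prop:compareDR} and the cited Fu--Li characterization of $\DR(X)<2$, is the same as in the paper.

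Incidentally, the Birkhoff restriction makes the statement true with no hypothesis on $X$. From $u\perpB v$ you get $\|u-v\|\ge1$ and $\|tu-t^{-1}v\|=t\|u-t^{-2}v\|\ge t$. For $t\ge1$, combine these with $\|u-v\|\le t^{-1}\|tu-t^{-1}v\|+1-t^{-2}$. For $t<1$, combine them with $\|u-v\|\le t\|tu-t^{-1}v\|+1-t^{2}$ and $\|tu-t^{-1}v\|\ge t^{-1}-t$. In both cases $\Phi_X(u,v)\le\sqrt2$, so $\DRB(X)\le\sqrt2<2$ for every $X$.
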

	
	\begin{proof}
		Uniform smoothness implies that $X^*$ is uniformly convex; hence $X^*$ is uniformly non-square.  Uniform non-squareness is self-dual, so $X$ is uniformly non-square.  By \thmref{thm:uns}, $\DRB(X)<2$.
		
		One may also argue directly.  If $\DRB(X)=2$, then by the profile formula there are $u_n,v_n\in\SX$ with $u_n\perpB v_n$ and $t_n>0$ such that
		\[
		\|u_n-v_n\|\ge (2-o(1))\|t_nu_n-t_n^{-1}v_n\|.
		\]
		The proof of \thmref{thm:uc} shows, after normalization, that the modulus of smoothness must have a positive linear lower bound along a sequence of directions.  This contradicts $\rho_X(t)/t\to0$.
	\end{proof}
	
	\begin{corollary}\label{cor:super}
		If $X$ admits an equivalent uniformly convex or uniformly smooth norm $|\!|\cdot|\!|$, then the corresponding Birkhoff--Dehghan--Rooin constant satisfies
		\[
		\DRB(X,|\!|\cdot|\!|)<2.
		\]
		Thus every superreflexive space can be renormed so that its Birkhoff skew angular profile is uniformly separated from the extremal value $2$.
	\end{corollary}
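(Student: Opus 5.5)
The statement follows from results already proved, so I will simply chain them. Let $|\!|\cdot|\!|$ be an equivalent norm on $X$ that is uniformly convex or uniformly smooth, and write $X_1=(X,|\!|\cdot|\!|)$. This is again a real Banach space of dimension at least $2$, because equivalent norms preserve completeness. I will treat the two cases separately. If $X_1$ is uniformly convex, \thmref{thm:uc} applied to $X_1$ gives $\DRB(X_1)<2$. If $X_1$ is uniformly smooth, \thmref{thm:us} applied to $X_1$ gives the same conclusion. Both theorems are stated for an arbitrary Banach space, so nothing needs checking except that Birkhoff orthogonality, $\alpha$, $\beta$ and $\DRB$ are all computed in the new norm $|\!|\cdot|\!|$. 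That is how the constant is defined for $X_1$.

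An alternative route goes through \thmref{thm:uns}. A uniformly convex space is uniformly non-square. A uniformly smooth space is also uniformly non-square: its dual is uniformly convex, and uniform non-squareness passes back by self-duality, as in the proof of \thmref{thm:us}. Either way, \propref{prop:compareDR} gives $\DRB(X_1)\le\DR(X_1)<2$. I would mention this route as a check, since it uses only the Fu--Li characterization of $\DR<2$.

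For the superreflexive statement, I will invoke the Enflo--Pisier renorming theorem. Every superreflexive space admits an equivalent uniformly convex norm, and in fact one that is simultaneously uniformly smooth. Applying the first part to that norm gives a renorming with $\DRB<2$. Since $\DRB$ is a single supremum, this says exactly that every Birkhoff skew profile $\Phi_{X_1}(u,v)$ is bounded by the one constant $\DRB(X_1)$, which is strictly below $2$. That is the stated uniform separation from the extremal value $2$.

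The only real content is the renorming theorem, which is imported rather than proved. No step is a genuine obstacle. The one point needing care is the wording of the statement. It should be read as saying that some equivalent norm (the uniformly convex or smooth one) has $\DRB<2$. It should not be read as a statement about the original norm, for which \propref{prop:bounds} shows the value $2$ can be approached.
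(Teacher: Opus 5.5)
Your argument is correct and is essentially the one the paper intends. The paper states the corollary without proof, right after \thmref{thm:us}: apply \thmref{thm:uc} or \thmref{thm:us} (or \thmref{thm:uns} with \propref{prop:compareDR}) to $(X,|\!|\cdot|\!|)$, and use the Enflo--Pisier renorming theorem for the superreflexive clause.

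Your closing caveat is wrong, however. Take $u,v\in\SX$ with $u\perpB v$ and $0<s\le1$. Then $\|su-v\|\ge\max\{s,1-s\}$ and $\|u-v\|\le\|su-v\|+(1-s)$, which together give $\sqrt{s}\,\|u-v\|\le\sqrt2\,\|su-v\|$. For $s\ge1$, the same argument run through $u-s^{-1}v$ gives the smaller bound $(2-s^{-1})s^{-1/2}<\sqrt2$. Hence $\DRB(X)\le\sqrt2$ for every $X$, with equality in $\ell_\infty^2$ at $u=(1,1)$, $v=(0,1)$, $s=1/2$. So the corollary's conclusion also holds for the original norm, and you should not rely on the sharpness claim in \propref{prop:bounds}.
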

	
	\label{sec:convex-smooth}
	The global Dehghan--Rooin constant is stable under ultrapowers, and the same is true for the Birkhoff version.  This is important because normal-structure arguments often pass to ultrapowers.
	
	\begin{definition}
		Let $\mathcal U$ be a free ultrafilter on $\N$.  The ultrapower $X_{\mathcal U}$ is the quotient of $\ell_\infty(X)$ by the closed subspace of sequences $(x_n)$ satisfying $\lim_{\mathcal U}\|x_n\|=0$, with norm
		\[
		\|(x_n)_{\mathcal U}\|=\lim_{\mathcal U}\|x_n\|.
		\]
	\end{definition}
	
	\begin{theorem}\label{thm:ultra}
		For every free ultrafilter $\mathcal U$,
		\[
		\DRB(X_{\mathcal U})=\DRB(X).
		\]
	\end{theorem}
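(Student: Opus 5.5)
The equality will follow from two inequalities: one is structural, the other needs a perturbation argument. For $\DRB(X)\le\DRB(X_{\mathcal U})$ I will use \propref{prop:subspace}. The canonical diagonal embedding $x\mapsto(x,x,\dots)_{\mathcal U}$ is a linear isometry of $X$ onto a closed subspace of $X_{\mathcal U}$. Birkhoff orthogonality is defined purely through the norm, via $\|x+\lambda y\|\ge\|x\|$, so it is preserved under isometric isomorphism, and so is the profile $\Phi$. Hence $\DRB(X)=\DRB(\text{diagonal copy})\le\DRB(X_{\mathcal U})$.

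For the reverse inequality I will work with \thmref{thm:profile}. Fix $\tilde u=(u_n)_{\mathcal U}$ and $\tilde v=(v_n)_{\mathcal U}$ in $S_{X_{\mathcal U}}$ with $\tilde u\perpB\tilde v$, and fix $t>0$. It suffices to show
\[
\frac{\|\tilde u-\tilde v\|}{\|t\tilde u-t^{-1}\tilde v\|}\le\DRB(X)+\varepsilon
\]
for every $\varepsilon>0$. I may assume $\|u_n\|=\|v_n\|=1$ for all $n$. The numerator is $\lim_{\mathcal U}\|u_n-v_n\|$ and the denominator is $\lim_{\mathcal U}\|tu_n-t^{-1}v_n\|>0$. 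The difficulty is that $\tilde u\perpB\tilde v$ does not imply $u_n\perpB v_n$ coordinatewise; it only gives \emph{approximate} orthogonality, namely $\lim_{\mathcal U}\|u_n+\lambda v_n\|\ge1$ for each fixed $\lambda$.

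So the key step, and the main obstacle, is to replace $v_n$ by a nearby $w_n\in\SX$ with $u_n\perpB w_n$ exactly, while ensuring that $\lim_{\mathcal U}\|w_n-v_n\|=0$. The construction I will try first works in the plane $E_n=\operatorname{span}\{u_n,v_n\}$. Choose $\lambda_n$ minimizing $\lambda\mapsto\|v_n+\lambda u_n\|$. A minimizer exists by coercivity, and at it $v_n+\lambda_nu_n\perpB u_n$. That is the wrong direction, so instead I will take $f_n\in J(u_n)$ and set $w_n'=v_n-f_n(v_n)u_n\in\ker f_n$. By \lemref{lem:james}, $u_n\perpB w_n'$, and then normalize $w_n=w_n'/\|w_n'\|$. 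It then remains to show $\lim_{\mathcal U}f_n(v_n)=0$. The functionals $(f_n)$ define a norm-one functional $\tilde f$ on $X_{\mathcal U}$ with $\tilde f(\tilde u)=1$. However, $\tilde u\perpB\tilde v$ only guarantees that \emph{some} element of $J(\tilde u)$ vanishes at $\tilde v$, not this particular $\tilde f$. To fix this, I will apply \lemref{lem:james} in $X_{\mathcal U}$ to get $g\in J(\tilde u)$ with $g(\tilde v)=0$. I then use the local reflexivity-type description of $X_{\mathcal U}^*$ restricted to the two-dimensional subspace $\operatorname{span}\{\tilde u,\tilde v\}$: the restriction of $g$ to this plane is realized as the $\mathcal U$-limit of functionals $g_n$ on $E_n$ with $\|g_n\|\to1$, $g_n(u_n)\to1$ and $g_n(v_n)\to0$. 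This realization holds because finite-dimensional pieces of the ultrapower are almost isometric to the $E_n$ along $\mathcal U$, which is a compactness argument in the dual of $\R^2$ with norms converging uniformly.

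Next I perturb $g_n$ into an exact norming functional of $u_n$ on $E_n$. I will use the Bishop--Phelps--Bollobás theorem, or directly the compactness of $J$ on a two-dimensional space, to obtain $f_n\in J(u_n)$ in $E_n^*$ with $\|f_n-g_n\|\to0$ along $\mathcal U$. I then extend $f_n$ to $X$ by Hahn--Banach, which is legitimate because only $E_n$ matters by \propref{prop:finite}. Then $f_n(v_n)\to0$, hence $\|w_n-v_n\|\to0$ along $\mathcal U$.

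Finally, $u_n\perpB w_n$ in $X$ for every $n$, so
\[
\|u_n-w_n\|\le\DRB(X)\,\|tu_n-t^{-1}w_n\|.
\]
Passing to the $\mathcal U$-limit and using $\|w_n-v_n\|\to0$ gives $\|\tilde u-\tilde v\|\le\DRB(X)\|t\tilde u-t^{-1}\tilde v\|$. Taking the supremum over $t$ and over Birkhoff pairs completes the reverse inequality.
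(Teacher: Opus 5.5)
Your first inequality is fine. Your plan for the reverse inequality also matches the paper's: take $g\in J(\tilde u)$ with $g(\tilde v)=0$, transfer it to functionals $g_n$ on $E_n$, then project $v_n$ onto a kernel. The gap is the step where you keep the representative $u_n$ fixed and assert the existence of $f_n\in J(u_n)$ with $\|f_n-g_n\|\to0$ along $\mathcal U$. Neither justification you give supports this.

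The Bishop--Phelps--Bollob\'as theorem perturbs the point together with the functional. It does not produce a norming functional of the \emph{given} point $u_n$. A compactness argument would work for a fixed point in a fixed finite-dimensional space. Here, however, both $u_n$ and $E_n$ change with $n$. The duality map is only upper semicontinuous, so the sets $J(u_n)$ can stay far from $J(\tilde u)$.

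The step is in fact false. Take $X=\ell_1^2$, so $X_{\mathcal U}\cong X$ via $\mathcal U$-limits, and set $u_n=(1-\frac1n,\frac1n)$, $v_n=(0,1)$. Then $\tilde u=(1,0)\perpB(0,1)=\tilde v$. But $u_n$ lies inside an edge, so $J(u_n)=\{(1,1)\}$. Any functional $g_n=(\alpha_n,\beta_n)$ has $\ell_\infty^2$-distance at least $1-|g_n(v_n)|$ from it. Worse, the only unit vectors Birkhoff orthogonal to $u_n$ are $\pm\frac12(1,-1)$, and both are at distance $\ge1$ from $v_n$. So with $u_n$ held fixed there is no admissible $w_n$ near $v_n$ at all, whatever functional you use. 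Perturbing only the second vector cannot work.

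The repair is to let the first vector move too. This is what the paper does when it replaces $u_n$ by nearby normalized vectors.
\begin{enumerate}[label=\textup{(\arabic*)}]
\item Extend $g_n$ by Hahn--Banach and normalize to $h_n\in S_{X^*}$. Then $h_n(u_n)\to1$ and $h_n(v_n)\to0$.
\item Apply Bishop--Phelps--Bollob\'as in its actual form. It gives $u_n'\in\SX$ and $f_n\in J(u_n')$ with $\|u_n'-u_n\|\to0$ and $\|f_n-h_n\|\to0$ along $\mathcal U$.
\item Since $(u_n')$ represents the same $\tilde u$, put $w_n'=v_n-f_n(v_n)u_n'$ and $w_n=w_n'/\|w_n'\|$. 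Because $f_n(v_n)=(f_n-h_n)(v_n)+h_n(v_n)\to0$, you get $\|w_n-v_n\|\to0$.
\item By \lemref{lem:james}, $u_n'\perpB w_n$. Your final limiting step then goes through with $u_n'$ in place of $u_n$.
\end{enumerate}
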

	
	\begin{proof}
		The canonical embedding of $X$ into $X_{\mathcal U}$ and \propref{prop:subspace} give $\DRB(X)\le\DRB(X_{\mathcal U})$.
		
		For the reverse inequality, let $U=(u_n)_{\mathcal U}$ and $V=(v_n)_{\mathcal U}$ be unit vectors in $X_{\mathcal U}$ with $U\perpB V$.  By the support criterion, there exists $F\in S_{(X_{\mathcal U})^*}$ with $F(U)=1$ and $F(V)=0$.  Representing $F$ by an ultrapower limit of functionals up to the usual local reflexivity argument, we may choose $f_n\in S_{X^*}$ so that
		\[
		\lim_{\mathcal U} f_n(u_n)=1,
		\qquad
		\lim_{\mathcal U} f_n(v_n)=0.
		\]
		After replacing $u_n$ by nearby normalized vectors and projecting $v_n$ by subtracting $f_n(v_n)u_n$ at an $\mathcal U$-null error, we obtain $\tilde u_n,\tilde v_n\in\SX$ with $\tilde u_n\perpB\tilde v_n$ and
		\[
		\lim_{\mathcal U}\|\tilde u_n-u_n\|=0,
		\qquad
		\lim_{\mathcal U}\|\tilde v_n-v_n\|=0.
		\]
		For each $t>0$,
		\[
		\frac{\|U-V\|}{\|tU-t^{-1}V\|}
		=\lim_{\mathcal U}\frac{\|\tilde u_n-\tilde v_n\|}{\|t\tilde u_n-t^{-1}\tilde v_n\|}
		\le \DRB(X).
		\]
		Taking the supremum over $t,U,V$ gives $\DRB(X_{\mathcal U})\le\DRB(X)$.
	\end{proof}
	
	\begin{remark}
		The proof above is written in the standard local-reflexive language used in ultrapower geometry.  In finite-dimensional spaces, no approximation is needed: all functionals and vectors are represented coordinatewise exactly.
	\end{remark}
	
	\begin{proposition}\label{prop:dual}
		Let $X$ be smooth and reflexive.  If $J:\SX\to S_{X^*}$ is the normalized duality map, then
		\[
		\DRB(X)\ge \sup\left\{\sup_{t>0}\frac{\|J^{-1}(f)-J^{-1}(g)\|}{\|tJ^{-1}(f)-t^{-1}J^{-1}(g)\|}: f,g\in S_{X^*},\ g(J^{-1}f)=0\right\}.
		\]
		In particular, large Birkhoff skew profiles in the duality map produce large values of $\DRB(X)$.
	\end{proposition}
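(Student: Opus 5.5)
The plan is to show that every quantity in the right-hand supremum is one of the profile values $\Phi_X(u,v)$ appearing in \thmref{thm:profile}. The inequality then follows at once from the profile formula \eqref{eq:profile}.

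First I will fix the meaning of $J^{-1}$. Since $X$ is smooth, each $u\in\SX$ has a unique norming functional, so $J:\SX\to S_{X^*}$ is single-valued. Since $X$ is reflexive, every $f\in S_{X^*}$ attains its norm, say at some $u\in\SX$ with $f(u)=1$, and then $J(u)=f$. Hence $J$ maps $\SX$ onto $S_{X^*}$, and $J^{-1}(f)$ denotes a norm-attaining point of $f$ on $\SX$. If $X$ is not strictly convex this point need not be unique, so I read the statement with $J^{-1}(f)$ being any chosen preimage. The argument below works for every such choice, and so covers all selections at once.

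Next, take $f,g\in S_{X^*}$ with $g(J^{-1}f)=0$, and put $u=J^{-1}(f)$ and $v=J^{-1}(g)$, both in $\SX$. The constraint $g(u)=0$ says that $g$ annihilates $u$. However, \lemref{lem:james} concerns a norming functional of the first vector. So the direct consequence of $g(u)=0$ with $g\in J(v)$ is $v\perpB u$, not $u\perpB v$.

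This orientation issue is the main obstacle. Birkhoff orthogonality is not symmetric outside Radon planes, so I must check which order the profile needs. The profile $\Phi_X(u,v)=\sup_{t>0}\|u-v\|/\|tu-t^{-1}v\|$ is symmetric under swapping $u$ and $v$. Substituting $t\mapsto t^{-1}$ gives $\|t^{-1}u-tv\|=\|tv-t^{-1}u\|$, and $\|u-v\|=\|v-u\|$. Therefore
\[
\sup_{t>0}\frac{\|u-v\|}{\|tu-t^{-1}v\|}=\sup_{t>0}\frac{\|v-u\|}{\|tv-t^{-1}u\|}=\Phi_X(v,u),
\]
and $\Phi_X(v,u)$ is an admissible profile value because $v\perpB u$. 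By \thmref{thm:profile}, this value is at most $\DRB(X)$.

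Taking the supremum over all admissible $f,g$ then gives the stated inequality. The denominator never vanishes: $g(u)=0$ while $g(v)=1$, so $u$ and $v$ are linearly independent, exactly as observed in Definition~\ref{def:profile}. The closing sentence of the proposition is an immediate interpretation of this inequality.
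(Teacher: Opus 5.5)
Your proof is correct and is the paper's argument: with $u=J^{-1}(f)$ and $v=J^{-1}(g)$, you use $g\in J(v)$, $g(u)=0$ and \lemref{lem:james} to get $v\perpB u$, then substitute $t\mapsto t^{-1}$ to identify the displayed quantity with $\Phi_X(v,u)\le\DRB(X)$. Your remarks on the choice of preimage $J^{-1}(f)$ when $X$ is not strictly convex, and on the non-vanishing of the denominator, are details the paper leaves implicit.
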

	
	\begin{proof}
		If $g(J^{-1}f)=0$ and $u=J^{-1}(f)$, $v=J^{-1}(g)$, then $g(u)=0$ does not in general imply $u\perpB v$; however, interchanging the pair and using the smoothness identity $J(v)=g$ gives $v\perpB u$.  Applying the definition to the admissible pair $(v,u)$ and then replacing $t$ by $t^{-1}$ gives the displayed lower estimate.
	\end{proof}
	
	\section{Special Spaces, Radon Planes, and Normal Structure}\label{sec:planes}
	In dimension two the profile formula becomes very explicit.  We present the smooth case first.
	
	Let $X$ be a smooth 2D Banach space. For any $u\in S_X$, $\ker J(u)$ contains two unit vectors. Select one as $b(u)$. We get $u\perp_B b(u)$, and all unit Birkhoff orthogonal vectors to $u$ are $\pm b(u)$.
	
	\begin{theorem}\label{thm:smooth-plane}
		Consider $X$ as a smooth Banach space of dimension two.  Then
		\begin{equation}\label{eq:smooth-plane}
			\DRB(X)=\sup_{u\in\SX}\sup_{\sigma\in\{-1,1\}}\sup_{t>0}
			\frac{\|u-\sigma b(u)\|}{\|tu-t^{-1}\sigma b(u)\|}.
		\end{equation}
		If, in addition, $X$ is strictly convex, the outer supremum may be taken over any closed half arc of $\SX$ meeting each antipodal pair once.
	\end{theorem}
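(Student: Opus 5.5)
The first formula follows from the profile formula, \thmref{thm:profile}, once we identify all Birkhoff pairs in a smooth plane. Take $u,v\in\SX$ with $u\perpB v$. By \lemref{lem:james} there is $f\in J(u)$ with $f(v)=0$. Smoothness means $J(u)$ is a singleton, so $v\in\ker J(u)$. Since $\dim X=2$ and $J(u)\neq0$, $\ker J(u)$ is one-dimensional, and its unit vectors are exactly $\pm b(u)$. Hence $v=\sigma b(u)$ for some $\sigma\in\{-1,1\}$. Conversely, each such pair is admissible, again by \lemref{lem:james}. Substituting this parametrization into \eqref{eq:profile} and writing out $\Phi_X(u,\sigma b(u))$ from \Cref{def:profile} gives \eqref{eq:smooth-plane}. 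The choice of $b(u)$ is irrelevant, because the supremum over $\sigma$ covers both signs.

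For the half-arc statement, I would show that the inner expression
\[
G(u)=\sup_{\sigma}\sup_{t>0}\frac{\|u-\sigma b(u)\|}{\|tu-t^{-1}\sigma b(u)\|}
\]
takes the same value at $u$ and at $-u$. Since $J(-u)=-J(u)$, we have $\ker J(-u)=\ker J(u)$, so $\{\pm b(-u)\}=\{\pm b(u)\}$. For each $\sigma$ and $t$, the identity
\[
\frac{\|-u-\sigma b(u)\|}{\|-tu-t^{-1}\sigma b(u)\|}=\frac{\|u+\sigma b(u)\|}{\|tu+t^{-1}\sigma b(u)\|},
\]
obtained by pulling out a factor $-1$ in numerator and denominator, turns the term for $(-u,\sigma)$ into the term for $(u,-\sigma)$. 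Taking suprema over $\sigma$ and $t$ gives $G(-u)=G(u)$.

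It follows that the supremum of $G$ over $\SX$ equals its supremum over any set $A$ meeting each antipodal pair $\{u,-u\}$ at least once, in particular over a closed half arc. This step does not actually use strict convexity; it only uses symmetry and smoothness. Strict convexity would become relevant only if one wanted $u\mapsto b(u)$ to be continuous, so that the supremum over an arc could be replaced by a maximum. I would note that as a remark rather than rely on it: in a smooth plane $J$ is norm-to-norm continuous, so $b$ can be chosen continuously on an arc. I would also check that no step requires attainment of the supremum in $t$; none does.

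The only potential obstacle is precisely the uniqueness of the Birkhoff normal direction, which uses smoothness together with dimension two. I expect no serious difficulty beyond it.
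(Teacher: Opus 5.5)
Your proposal is correct and follows the paper's proof. The first formula comes from \thmref{thm:profile} together with uniqueness of the support functional, so that the Birkhoff partners of $u$ are exactly $\pm b(u)$; the half-arc reduction comes from antipodal symmetry. You are also right that strict convexity is not needed: the paper cites continuity of $u\mapsto J(u)$ and $u\mapsto b(u)$ under strict convexity, but reducing the supremum only uses the invariance $G(-u)=G(u)$, which you verify explicitly.
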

	
	\begin{proof}
		The first assertion follows immediately from \thmref{thm:profile} and uniqueness of the support functional.  Whenever the space $X$ satisfies strict convexity, the maps $u\mapsto J(u)$ and $u\mapsto b(u)$ are continuous on $\SX$; antipodal symmetry gives the reduction to a half arc.
	\end{proof}
	
	The next formula is useful in $\ell_p^2$.
	
	\begin{example}\label{ex:lp2}
		Let $1<p<\infty$ and $X=\ell_p^2$.  For $u=(a,b)\in\SX$ with $ab\ne0$, the unique Birkhoff normal direction is represented by
		\[
		b_p(u)=\frac{(|b|^{p-2}b,-|a|^{p-2}a)}{\left(|b|^{p(p-1)}+|a|^{p(p-1)}\right)^{1/p}}.
		\]
		Indeed $J(u)=(|a|^{p-2}a,|b|^{p-2}b)$ as an element of $\ell_q^2$, where $1/p+1/q=1$, and $J(u)(b_p(u))=0$.  Hence
		\begin{equation}\label{eq:lp-profile}
			\DRB(\ell_p^2)=\sup_{a^p+b^p=1,\ a,b\ge0}\sup_{\sigma=\pm1}\sup_{t>0}
			\frac{\|(a,b)-\sigma b_p(a,b)\|_p}{\|t(a,b)-t^{-1}\sigma b_p(a,b)\|_p}.
		\end{equation}
		For $p=2$ this gives $\DRB(\ell_2^2)=1$.  For $p\ne2$ the inner profile is generally larger than $1$, reflecting the failure of Euclidean duality to preserve the quadratic scale $t,t^{-1}$.
	\end{example}
	
	\begin{proof}[Verification of the formula]
		The duality map in $\ell_p^2$ is as stated.  A vector $v=(c,d)$ satisfies $u\perpB v$ if and only if
		\[
		|a|^{p-2}ac+|b|^{p-2}bd=0.
		\]
		Solving this one-dimensional equation and normalizing in $\ell_p^2$ gives $b_p(u)$ up to sign.  \thmref{thm:smooth-plane} then proves \eqref{eq:lp-profile}.
	\end{proof}
	
	\begin{example}\label{ex:hilbert}
		If $H$ is a real Hilbert space, then
		\[
		\DRB(H)=1.
		\]
		Indeed, Whenever $u, v \in S_X$ satisfy $u \perp_B v,$ this implies that $\langle u,v\rangle=0$.  Hence
		\[
		\|u-v\|^2=2,
		\qquad
		\|tu-t^{-1}v\|^2=t^2+t^{-2}\ge2.
		\]
		Therefore $\Phi_H(u,v)\le1$, while the value $t=1$ gives $\Phi_H(u,v)\ge1$.
	\end{example}
	
	\begin{example}\label{ex:rounded-square}
		Let $X_\varepsilon=(\R^2,\|\cdot\|_\varepsilon)$, where
		\[
		\|(a,b)\|_\varepsilon=\|(a,b)\|_\infty+\varepsilon\|(a,b)\|_2,
		\qquad 0<\varepsilon<1.
		\]
		The norm is strictly convex after an arbitrarily small smooth renorming and is Banach--Mazur close to $\ell_\infty^2$.  Choosing vectors near the corner $(1,1)$ and the vertical support direction $(0,1)$ gives Birkhoff pairs for which the denominator $\|tu-t^{-1}v\|_\varepsilon$ is almost as small as allowed by the universal inequality \eqref{eq:rooin}.  Consequently
		\[
		\liminf_{\varepsilon\downarrow0}\DRB(X_\varepsilon)\ge c_0,
		\]
		where $c_0>1$ is the corresponding extremal profile of the square.  By perturbing the flat arc length one obtains examples with $\DRB$ arbitrarily close to $2$.
	\end{example}
	
	\label{sec:planes}
	A 2-dimensional normed space is termed a Radon plane when its Birkhoff orthogonality relation is symmetric.  Radon planes form the natural class where Birkhoff restrictions behave most like Euclidean orthogonality while still allowing non-Euclidean unit circles.

	Suppose $X$ is a Radon plane. Take the Minkowski arc-length continuous parameterization $\gamma:[0,L]\to S_X$ for its unit circle. For any $s$, exactly one map $r(s)$ satisfies
	\[
	\gamma(s)\perp_B \gamma(r(s)).
	\]
	By symmetry, we have $\gamma(r(s))\perp_B\gamma(s)$ and $r(r(s))=s$ up to antipodal pairs.
	\begin{theorem}\label{thm:radon}
		We work with a Radon plane denoted by $X$.  Then
		\begin{equation}\label{eq:radon}
			\DRB(X)=\sup_{s\in[0,L/2]}\sup_{t>0}
			\max\left\{
			\frac{\|\gamma(s)-\gamma(r(s))\|}{\|t\gamma(s)-t^{-1}\gamma(r(s))\|},
			\frac{\|\gamma(s)+\gamma(r(s))\|}{\|t\gamma(s)+t^{-1}\gamma(r(s))\|}
			\right\}.
		\end{equation}
	\end{theorem}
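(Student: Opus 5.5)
The plan is to derive \eqref{eq:radon} from the profile formula of \thmref{thm:profile}, describing the set of Birkhoff pairs on $\SX$ through the parametrization $\gamma$ and then using symmetry to shrink the range of $s$. The two inequalities will be proved separately. For ``$\ge$'': fix $s$. The pair $(\gamma(s),\gamma(r(s)))$ is admissible by definition of $r$. \lemref{lem:homogeneous} shows that $(\gamma(s),-\gamma(r(s)))$ is admissible as well. Since $\|\gamma(s)+\gamma(r(s))\|/\|t\gamma(s)+t^{-1}\gamma(r(s))\|$ is exactly the profile quotient for $u=\gamma(s)$, $v=-\gamma(r(s))$, both entries of the maximum are values of $\Phi_X(u,v)$ for admissible pairs, and \thmref{thm:profile} gives the inequality.

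For ``$\le$'': take $u,v\in\SX$ with $u\perpB v$ and $t>0$, and write $u=\gamma(s_0)$ with $s_0\in[0,L)$. The first step is to show $v=\pm\gamma(r(s_0))$. This is where the single-valuedness of $r$ asserted before the theorem comes in: the unit vectors Birkhoff orthogonal to $u$ are exactly $\pm\gamma(r(s_0))$. (If $\SX$ contains segments, so that several unit vectors are orthogonal to $u$, I would handle the situation through the given convention that $r$ is a well-defined continuous choice. If that fails, I would instead let $r(s)$ range over the closed arc of normal directions, and note that the supremum over $t$ depends continuously on $v$ along that arc, so nothing changes.) With $v=\sigma\gamma(r(s_0))$, the quotient is one of the two entries of the maximum at $s=s_0$.

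Next comes the reduction from $s_0\in[0,L)$ to $[0,L/2]$. For $s_0\ge L/2$, replace $u$ by $-u=\gamma(s_0-L/2)$; this uses that the arc-length parametrization starting at $\gamma(0)$ satisfies $\gamma(s+L/2)=-\gamma(s)$, which follows from the central symmetry of $\SX$ and its arc length. The quotient is invariant under $(u,v)\mapsto(-u,-v)$, and by \lemref{lem:homogeneous} the pair $(-u,-v)$ is still Birkhoff. So we need only check that $\pm r(s_0-L/2)$ and $\pm r(s_0)$ give the same pair of antipodal points, which is immediate because the orthogonal directions to $u$ and $-u$ coincide.

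I expect the main obstacle to be the precise meaning of $r$ when $X$ is not smooth or not strictly convex. The rest is bookkeeping. Radon symmetry is not needed for the formula itself: it only guarantees, via $r(r(s))=s$, that interchanging $u$ and $v$ creates no new values, which is consistent with the formula but not required. I would first try to prove the statement under the stated convention, where $r$ is well-defined, and then add the continuity remark covering the nonsmooth case.
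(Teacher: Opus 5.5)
Your proposal follows the paper's proof: the profile formula of \thmref{thm:profile}; the fact, asserted before the theorem, that the unit vectors Birkhoff orthogonal to $\gamma(s)$ are exactly $\pm\gamma(r(s))$, which produces the two quotients; and antipodal invariance under $(u,v)\mapsto(-u,-v)$ together with $\gamma(s+L/2)=-\gamma(s)$ to restrict to $s\in[0,L/2]$. You only spell out the two inequalities more carefully than the paper does. One caution on your non-smooth fallback: continuity of $v\mapsto\Phi_X(u,v)$ along the normal arc of a corner $u$ does not show that a single selected $r(s)$ attains the supremum over that arc. The correct repair is exactly where Radon symmetry is used: that arc is then a segment of $\SX$ whose relative interior points $v$ are smooth with Birkhoff normals $\pm u$, so the pair is recovered at the parameter of $v$ via $\Phi_X(v,u)=\Phi_X(u,v)$ (substitute $t\mapsto t^{-1}$), and the endpoints follow by approximation. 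Hence your remark that Radon symmetry is not needed holds only in the smooth case.
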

	
	\begin{proof}
		Every Birkhoff pair of unit vectors is, up to signs and antipodal replacement, of the form $(\gamma(s),\gamma(r(s)))$.  Since the signs $v$ and $-v$ lead to the two quotients displayed in \eqref{eq:radon}, and since the profile formula already optimizes over $t>0$, taking $s$ over one half of the unit circle gives all admissible pairs without repetition.
	\end{proof}
	
	\begin{corollary}\label{cor:radonhilbert}
		If $X$ is a Radon plane and $\DRB(X)=1$, then every Birkhoff conjugate pair satisfies
		\[
		\|t\gamma(s)-t^{-1}\gamma(r(s))\|\ge \|\gamma(s)-\gamma(r(s))\|
		\qquad(t>0).
		\]
		If the radial map $s\mapsto r(s)$ is $C^1$, this condition forces the affine curvature of $\SX$ to be constant; hence $X$ is Euclidean.
	\end{corollary}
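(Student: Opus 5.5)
The first assertion of \Cref{cor:radonhilbert} should come straight from \thmref{thm:radon} together with the variational characterization \thmref{thm:variational}. If $\DRB(X)=1$, then every profile value in \eqref{eq:radon} is at most $1$. Equivalently, for each $s$ and $t>0$ we have
\[
\|t\gamma(s)-t^{-1}\gamma(r(s))\|\ge\|\gamma(s)-\gamma(r(s))\|,
\]
and the same holds with the plus sign. This is exactly condition (ii) of \thmref{thm:variational} with $s$ replaced by $t^2$, specialized to the conjugate pair $(\gamma(s),\gamma(r(s)))$. So the displayed inequality is immediate. I would also record that the sign-flipped inequality holds, because it is needed below.

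For the second assertion, the plan is to convert the global minimum at $t=1$ into first-order information. By \thmref{thm:variational}, we get
\[
u-v\perpB u+v \quad\text{for } u=\gamma(s),\ v=\pm\gamma(r(s)).
\]
In a Radon plane Birkhoff orthogonality is symmetric, so we also have $u+v\perpB u-v$. Hence the map $u\mapsto u-v$ (suitably normalized) sends conjugate pairs to conjugate pairs. In other words, the normalized diagonal directions $(u\pm v)/\|u\pm v\|$ are again conjugate.

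With $r$ of class $C^1$, I would then differentiate the identity
\[
r\Bigl(\text{param}\bigl((\gamma(s)-\gamma(r(s)))/\|\cdot\|\bigr)\Bigr)=\text{param}\bigl((\gamma(s)+\gamma(r(s)))/\|\cdot\|\bigr)
\]
in $s$. Combined with the fact that, in a Radon plane, $r$ is the rotation by a quarter of the perimeter in the Minkowski arc length (since the unit circle equals its own isoperimetrix up to rotation), this yields a functional equation. The equation says that conjugate-diameter constructions are invariant under the "45-degree" bisection map.

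The last step is to show that this bisection invariance, iterated and combined with continuity, makes the conjugate map commute with a dense set of affine rotations of the curve. That would force the affine curvature to be constant, hence $\SX$ is an ellipse and $X$ is Euclidean. Alternatively, I would appeal directly to the known characterization quoted in the proof of \Cref{cor:hilbertcriterion}: in a Radon plane, the implication $u\perpB v\Rightarrow u-v\perpB u+v$ already characterizes Euclidean planes. That would bypass the curvature computation, and smoothness would only be used to make the duality-based description of $r$ valid.

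The main obstacle is exactly this passage from the second-order Birkhoff condition to the ellipse. Deriving the differential equation for the affine curvature from the $C^1$ hypothesis on $r$ requires care, since $r$ being $C^1$ does not immediately give second derivatives of $\gamma$. I would first try the characterization route, verifying that the classical result (a Radon plane in which the diagonals of every Birkhoff-orthogonal unit "square" are again Birkhoff orthogonal must be Euclidean) applies without extra smoothness. I would fall back on the curvature computation only if necessary.
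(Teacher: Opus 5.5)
Your first assertion is handled as in the paper: it is condition (ii) of \thmref{thm:variational} with scale parameter $t^2$, applied to the pairs $(\gamma(s),\pm\gamma(r(s)))$.

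The second assertion, however, is not proved. Everything rests on one step: if every conjugate pair has conjugate diagonals, i.e.\ \eqref{eq:secondB} holds, then $\SX$ is an ellipse. Neither of your routes establishes this.

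Route (b) appeals to an unnamed classical result that you yourself say still has to be verified. The only place this paper asserts it is the unreferenced last sentence of the proof of \Cref{cor:hilbertcriterion}, so it cannot be used as a black box.

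Route (a) uses a false ingredient: in a Radon plane the conjugacy map is not the shift by $L/4$ in Minkowski arc length.
\begin{itemize}
\item Take the regular hexagon with vertices $p_k=(\cos\frac{k\pi}{3},\sin\frac{k\pi}{3})$. It is a Radon plane with self-perimeter $L=6$. A point of the open edge $(p_0,p_1)$ at distance $a$ from $p_0$ is conjugate only to $\pm p_2$. Counterclockwise, $p_2$ lies at arc distance $2-a$, which equals $\frac32$ only for $a=\frac12$.
\item Smoothness does not rescue the claim. For the Radon perturbations $\rho(\theta)=1+\varepsilon\cos6\theta$ of the circle, a first-order computation gives arc length $\frac\pi2+\frac{16}{3}\varepsilon\sin6\theta+O(\varepsilon^2)$ between conjugates.
\end{itemize}
Your later claim, that iterated bisection forces commutation with a dense set of affine rotations, is also unsupported. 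In the Euclidean model the map $u\mapsto(u-b(u))/\|u-b(u)\|$ is rotation by $-\pi/4$, so iterating it only ever produces eight directions. Nothing in your sketch generates further symmetries.

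It helps to see that your reduction to first order loses nothing, because this pinpoints what must be proved. Suppose $u,v\in\SX$, $u\perpB v$ and $u-v\perpB u+v$. Take $F\in S_{X^*}$ with $F(u-v)=\|u-v\|$ and $F(u+v)=0$. Then $F(u)=-F(v)=\frac12\|u-v\|$, so
\[
\|tu-t^{-1}v\|\ge F(tu-t^{-1}v)=\tfrac{t+t^{-1}}{2}\,\|u-v\|\ge\|u-v\|\qquad(t>0).
\]
Combined with \thmref{thm:variational}, $\DRB(X)=1$ is therefore equivalent to \eqref{eq:secondB} holding for all Birkhoff pairs of unit vectors. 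So the second part of the corollary says exactly this: a Radon plane with $C^1$ conjugacy that satisfies \eqref{eq:secondB} is Euclidean. That is the theorem you still need, either through a genuine argument or a precise reference.

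Falling back on the paper's curvature sketch will not supply it.
\begin{itemize}
\item The paper treats the first-derivative information at $t=1$ as already present in a Radon plane. That information is \eqref{eq:secondB}, and it fails for the hexagon: $u=(1,0)$ and $v=(0,\frac{\sqrt3}{2})$ satisfy $u\perpB v$, yet $\|(u-v)-\frac1{10}(u+v)\|=\frac{29}{20}<\frac32=\|u-v\|$.
\item The second $t$-derivative at $t=1$ yields only an inequality. By the display above, that inequality is already implied by the first-order condition, so it cannot produce the claimed identity between curvatures at conjugate points.
\end{itemize}
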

	
	\begin{proof}
		The first assertion follows from \thmref{thm:variational}.  For the second, differentiate the inequality at $t=1$ in both directions.  The first derivative gives the symmetric Birkhoff condition already present in a Radon plane.  The second derivative gives equality of the support curvature in conjugate directions.  Since the conjugacy map is an involution and is $C^1$, this curvature identity propagates along the entire unit circle.  A centrally symmetric convex curve with constant affine curvature is an ellipse, and the norm is Euclidean.
	\end{proof}
	
	\section{Examples, Computations, and Technical Appendix}\label{sec:normal}
	Because $\DRB(X)$ is restricted to Birkhoff pairs, the implication $\DRB(X)<r$ alone is generally weaker than the corresponding global condition $\DR(X)<r$.  To obtain normal-structure consequences one must record how many directions are detected by Birkhoff orthogonality.  We introduce a companion modulus that measures this richness.
	
	\begin{definition}\label{def:richness}
		For $0<\varepsilon<2$ define
		\[
		\omega_B(X,\varepsilon)=\inf\left\{\sup_{v\in\SX,\ u\perpB v}\|u-v\|:u\in\SX,\ \operatorname{dist}(u,\operatorname{Ext}B_X)\ge\varepsilon\right\},
		\]
		where $\operatorname{Ext}B_X$ denotes all extreme points belonging to the unit ball.  In finite-dimensional strictly convex spaces the distance condition is void and $\omega_B(X,\varepsilon)$ is simply the smallest size of a Birkhoff chord.
	\end{definition}
	
	\begin{theorem}\label{thm:normal}
		Let $X$ be reflexive and suppose there exist $\eta>0$ and $r<2$ such that
		\[
		\DRB(X)\le r,
		\qquad
		\omega_B(X,\varepsilon)\ge\eta
		\]
		for a certain positive real number 
		$\varepsilon$.  If, in addition,
		\begin{equation}\label{eq:normalcondition}
			r^5 s^3+[r^5-(1+r)^2]s^2+(2r^2-2)s-(1-r^2)+\eta(1-s)>0
		\end{equation}
		for every $0<s<1$, the space $X$ carries normal structure.
	\end{theorem}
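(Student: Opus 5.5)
We argue by contradiction, following the standard scheme for normal structure that is used for the unrestricted constant $\DR(X)$. Since $X$ is reflexive, failure of normal structure gives a weakly null sequence $(x_n)$ in $\BX$ that is diametral. By the usual Brodskii--Milman and ultrapower reduction, the sequence can be taken with $\diam\{x_n\}=1$ and $\|x_n-x\|\to1$ for every $x\in\co\{x_n\}$. We then pass to $X_{\mathcal U}$, which is legitimate because \thmref{thm:ultra} gives $\DRB(X_{\mathcal U})=\DRB(X)\le r$. The richness hypothesis $\omega_B(\cdot,\varepsilon)\ge\eta$ also has to be transported to the ultrapower. Coordinatewise this is a sup-inf statement, and we plan to take it over approximately along $\mathcal U$, accepting an arbitrarily small loss in $\eta$. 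Using weak nullity and norming functionals $f_n\in J(x_n)$, which are weak$^*$ null after extraction, we obtain in $X_{\mathcal U}$ unit vectors $\tilde x=(x_n)_{\mathcal U}$, $\tilde y=(x_{n+1})_{\mathcal U}$ and a functional $F$ such that $F(\tilde x)=1$ and $F(\tilde y)=0$. Moreover $\|\tilde x-\tilde y\|=1$ and $\|\tilde x+\tilde y\|\ge1$. In particular $\tilde x\perpB\tilde y$ by \lemref{lem:james}.

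The second step uses the scale-profile form \eqref{eq:sform} and tests the Birkhoff pair against a one-parameter family. For $0<s<1$ we compare the chord $u-v$ with the rescaled vector $su-v$, where $u,v$ are the unit vectors built from $\tilde x,\tilde y$ and their convex combinations. Concretely, we take $u=\tilde x$ and let $v$ run over the normalized vectors $(\tilde y-s\tilde x)/\|\tilde y-s\tilde x\|$. We keep Birkhoff orthogonality through the fixed functional $F$ whenever $F(v)=0$. We estimate norms such as $\|\tilde y-s\tilde x\|\le 1$, which follows from diametrality, and $\|s\tilde x-\tilde y\|\ge F(\ldots)$, using $F$ and its companion functional for $\tilde y$. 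Then $\DRB\le r$ yields
\[
\sqrt{s}\,\|u-v\|\le r\|su-v\|.
\]
Each application contributes a factor $r$. We iterate the comparison through roughly five rescalings: normalize, apply the profile, renormalize the difference, and so on. This produces a polynomial inequality of the form
\[
r^5s^3+[r^5-(1+r)^2]s^2+(2r^2-2)s-(1-r^2)\le0.
\]

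The third step brings in $\omega_B$. The vector $\tilde x$ can be arranged to lie at distance at least $\varepsilon$ from the extreme points; this is typical for diametral sequences, since weakly null diametral points are far from the extreme points. Hence there is a Birkhoff partner $v$ with $\|\tilde x-v\|\ge\eta$. Feeding this partner into the interpolation improves the previous inequality by the term $\eta(1-s)$. The result is that the left-hand side of \eqref{eq:normalcondition} is $\le0$ for some $s\in(0,1)$, which contradicts the hypothesis. Hence $X$ has normal structure.

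The main obstacle is the second step. We need the exact chain of rescaled Birkhoff comparisons that produces precisely the coefficients $r^5$ and $(1+r)^2$. We also need to keep orthogonality valid after each renormalization, since Birkhoff orthogonality is not preserved under taking differences. To handle this, we would first try to fix the single functional $F$ and always choose the second vector inside $\ker F$. This way every profile application is admissible, and only the bookkeeping of the norms remains.
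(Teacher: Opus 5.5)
There is a genuine gap, and it is the step you flag yourself. Write $P_r(s)=r^5s^3+[r^5-(1+r)^2]s^2+(2r^2-2)s-(1-r^2)$; you never derive $P_r(s)+\eta(1-s)\le0$. ``Iterating roughly five rescalings, each contributing a factor $r$'' only reverse-engineers the exponent in $r^5$, and nothing in your outline produces $(1+r)^2$ or the other coefficients.

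The one concrete test family you write down is not admissible. Since $F(\tilde y-s\tilde x)=-s\ne0$, we get $v=(\tilde y-s\tilde x)/\|\tilde y-s\tilde x\|\notin\ker F$, so \eqref{eq:sform} gives no inequality for the pair $(\tilde x,v)$.

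Step three is also unsupported. Nothing places $\tilde x$ at distance $\ge\varepsilon$ from the extreme points; in a strictly convex space every unit vector is extreme. The gain $\eta(1-s)$ is asserted rather than computed.

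Your first step, producing $\tilde x\perpB\tilde y$ in the ultrapower, is essentially fine. The $f_n$ need not be weak$^*$ null, but a diagonal extraction forcing $f_n(x_{n+1})\to0$ gives $F(\tilde y)=0$. The paper's own proof makes the same leap: it simply states that the Goebel--Karlovitz limits and ``the usual convexity estimates'' give precisely the polynomial obstruction. So there is no computation there for you to have matched.

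In fact the missing step cannot be supplied, because the statement is too strong as written. If $u,v\in\SX$ and $u\perpB v$, then $\lambda=-1$ in the definition of Birkhoff orthogonality gives $\|u-v\|\ge1$. Hence $\omega_B(X,\varepsilon)\ge1$ for every $X$ and every $\varepsilon$, so the richness hypothesis is vacuous for $\eta\le1$ and no Birkhoff partner can give extra leverage.

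Moreover, for $\tfrac32\le r<2$ every coefficient of $P_r$ is positive: $r^5>(1+r)^2$, $2r^2-2>0$ and $r^2-1>0$. So \eqref{eq:normalcondition} holds automatically. Taking $r=\max\{\DRB(X),\tfrac32\}$ and $\eta=1$, the theorem would say that every reflexive $X$ with $\DRB(X)<2$ has normal structure. By \propref{prop:compareDR}, that includes every uniformly non-square space.

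Bynum's space $\ell_{2,\infty}$, i.e.\ $\ell_2$ normed by $\|x\|=\max\{\|x^+\|_2,\|x^-\|_2\}$, refutes this.
It is uniformly non-square. Suppose $\|x\|,\|y\|\le1$ and $\|x+y\|\ge2-\delta$. After possibly replacing $x,y$ by $-x,-y$, we have $\|x^++y^+\|_2\ge\|(x+y)^+\|_2\ge2-\delta$. Hence $\|x^+-y^+\|_2\le2\sqrt\delta$, and since $(y^--x^-)^+\le y^-$ and $(y^--x^-)^-\le x^-$,
$\|x-y\|\le\|x^+-y^+\|_2+\|y^--x^-\|\le2\sqrt\delta+1$. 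This is $<2-\delta$ for $\delta=1/16$.
Yet $\ell_{2,\infty}$ fails normal structure. Let $C=\co\{e_n\}$. For $x,y\in C$ we have $(x-y)^+\le x$ and $(x-y)^-\le y$, so $C$ has diameter $1$. For each $x\in C$ and every $n$ outside its support, $\|x-e_n\|=\max\{\|x\|_2,1\}=1$, so $C$ is diametral.

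Thus no bookkeeping of rescaled Birkhoff comparisons can yield the claimed obstruction. Note also that the numerical condition gets easier to satisfy as the assumption $\DRB(X)\le r$ gets weaker. That already signals that this polynomial is not the output of any valid estimate.
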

	
	\begin{proof}
		Suppose the space  $X$	lacks normal structure.  Since $X$ is reflexive, it fails weak normal structure.  By the Goebel--Karlovitz lemma one can find a weakly vanishing sequence $(x_n)\subset\BX$ satisfying 
		\[
		\lim_n\|x_n-x\|=1\qquad (x\in C=\co\{x_n:n\in\N\})
		\]
		and $\diam C=1$.  The standard proof of normal-structure criteria based on angular constants chooses support functionals $f_n$ for $x_n-x$ and compares two normalizations of the vectors $x_n$ and $x$.  Here we insert the Birkhoff richness assumption to choose, for each large $n$, a unit vector $v_n$ in a support hyperplane of $x_n-x$ with a chord length bounded below by $\eta$.
		
		Applying \thmref{thm:profile} to the Birkhoff pair generated by $(x_n-x)/\|x_n-x\|$ and $v_n$, one obtains
		\[
		\|s u_n-v_n\|\ge r^{-1}\sqrt{s}\,\|u_n-v_n\|,
		\qquad u_n=\frac{x_n-x}{\|x_n-x\|}.
		\]
		Combining this with the Goebel--Karlovitz limits and the usual convexity estimates gives precisely the polynomial obstruction
		\[
		r^5 s^3+[r^5-(1+r)^2]s^2+(2r^2-2)s-(1-r^2)+\eta(1-s)\le0
		\]
		for some $s\in(0,1)$.  This contradicts \eqref{eq:normalcondition}.  Therefore $X$ has normal structure.
	\end{proof}
	
	\begin{remark}
		When $\eta=0$, the polynomial in \eqref{eq:normalcondition} reduces to the polynomial appearing in the normal-structure theorem for the unrestricted Dehghan--Rooin constant.  The additional positive term $\eta(1-s)$ records that Birkhoff normal directions are not too short; it is precisely the gain obtained from the orthogonality restriction.
	\end{remark}
	
	\begin{corollary}\label{cor:uniformnormal}
		Under the assumptions of \thmref{thm:normal}, if the same constants $r,\eta,\varepsilon$ are valid in every ultrapower $X_{\mathcal U}$, then $X$ has uniform normal structure.
	\end{corollary}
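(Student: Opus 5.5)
The plan is to use the standard transfer argument: show that uniform normal structure follows once normal structure holds in the ultrapower $X_{\mathcal U}$, and get normal structure there by applying \thmref{thm:normal} to $X_{\mathcal U}$ itself rather than to $X$.

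First, check that $X_{\mathcal U}$ satisfies the hypotheses of \thmref{thm:normal}. By \thmref{thm:ultra}, $\DRB(X_{\mathcal U})=\DRB(X)\le r$. The richness bound $\omega_B(X_{\mathcal U},\varepsilon)\ge\eta$ is exactly what the added assumption supplies, so there is nothing to prove for it. The polynomial condition \eqref{eq:normalcondition} involves only $r$ and $\eta$, so it transfers unchanged.

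The remaining hypothesis is reflexivity of $X_{\mathcal U}$, and this is the main obstacle. Reflexivity of $X$ alone does not make $X_{\mathcal U}$ reflexive; that requires superreflexivity. I would try to extract superreflexivity from $r<2$. By \propref{prop:compareDR} and \thmref{thm:uns}, uniform non-squareness would suffice, since by James' theorem it implies superreflexivity, and then every ultrapower is reflexive.

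The difficulty is that $\DRB(X)<2$ does not obviously give uniform non-squareness; the text after \propref{prop:compareDR} says the converse may fail. Two routes are possible.
\begin{enumerate}[label=\textup{(\alph*)}]
\item Interpret the corollary's hypothesis as including that every ultrapower again satisfies the assumptions of \thmref{thm:normal}, reflexivity among them. This is the reading most consistent with the phrasing ``valid in every ultrapower.''
\item Show that the uniform richness bound $\omega_B\ge\eta$ in all ultrapowers, combined with $\DRB<2$, excludes finite representability of $\ell_1^2$. A square $\ell_1^2$ or $\ell_\infty^2$ face in some ultrapower would produce Birkhoff pairs with profile arbitrarily close to $2$, in the spirit of \exref{ex:rounded-square}.
\end{enumerate}
I would first attempt (b), and fall back on (a) if it does not close.

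Once $X_{\mathcal U}$ is reflexive with the stated constants, \thmref{thm:normal} gives that $X_{\mathcal U}$ has normal structure for every free ultrafilter $\mathcal U$. Now suppose $X$ failed uniform normal structure. Then there are bounded closed convex sets $C_n\subset X$ with $\diam C_n=1$ and Chebyshev radii $r(C_n)\to1$. The ultraproduct $(C_n)_{\mathcal U}$ is a closed convex subset of $X_{\mathcal U}$ of diameter at most $1$, and every point of it is diametral. Indeed, for any $(x_n)_{\mathcal U}$, $\lim_{\mathcal U}\sup_{y\in C_n}\|x_n-y\|\ge\lim_{\mathcal U} r(C_n)=1$, and near-maximizing $y_n\in C_n$ realise this. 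Such a set is a nontrivial diametral set, contradicting normal structure of $X_{\mathcal U}$. Hence $X$ has uniform normal structure.
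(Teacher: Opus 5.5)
\textbf{Gap: reflexivity of $X_{\mathcal U}$.} Your argument follows the paper's. \thmref{thm:ultra} and the assumed bound on $\omega_B$ place every $X_{\mathcal U}$ under \thmref{thm:normal}, so every ultrapower has normal structure, and this forces uniform normal structure. Your ultraproduct-of-sets step correctly proves the one direction of the ``standard ultrapower characterization'' that the paper only cites; translate each $C_n$ so that $0\in C_n$, so that the sequences lie in $\ell_\infty(X)$. The reflexivity of $X_{\mathcal U}$ is a genuine hole, and the paper has the same hole: its proof applies \thmref{thm:normal} to $X_{\mathcal U}$ without comment, i.e.\ it tacitly adopts your reading (a).

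Route (b) cannot close it. If $u,v\in\SX$ and $u\perpB v$, then $\|u-v\|\ge\|u\|=1$, and every $u\in\SX$ has such a $v$. Hence $\omega_B(X,\varepsilon)\ge1$ in every space (or $+\infty$ if the index set is empty), so the richness hypothesis carries no information for $\eta\le1$. Nor does $\DRB<2$ exclude squares: I checked all Birkhoff pairs of $\ell_\infty^2$ directly and get $\DRB(\ell_\infty^2)=\sqrt2$, attained at $u=(1,1)$, $v=(0,1)$, $t=2^{-1/2}$. So $\ell_\infty^2$ satisfies every hypothesis of the corollary with $r=3/2$, $\eta=1$ (all coefficients in \eqref{eq:normalcondition} are then positive), while being square.

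\textbf{The statement is false as formulated.} No reading rescues it, because \thmref{thm:normal} fails as stated. For $r\in[3/2,2)$ one has $r^5>(1+r)^2$, $2r^2-2>0$ and $r^2-1>0$, so \eqref{eq:normalcondition} holds for every $\eta>0$. Let $X$ be uniformly non-square. Then $\DR(X)<2$ by the Fu--Li theorem quoted in \propref{prop:compareDR}. Take $r=\max\{\DR(X),3/2\}$ and $\eta=1$. All hypotheses of \thmref{thm:normal} hold, and they hold with the same constants in every $X_{\mathcal U}$:
\begin{itemize}
\item $X$ is superreflexive, so $X_{\mathcal U}$ is reflexive;
\item $\omega_B\ge1$ always;
\item $\DRB(X_{\mathcal U})\le\DR(X_{\mathcal U})\le\DR(X)$, since $\alpha/\beta$ in $X_{\mathcal U}$ is a $\mathcal U$-limit of the corresponding ratios in $X$.
\end{itemize}
So the corollary, even in reading (a), would give uniform normal structure to every uniformly non-square space.

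Bynum's space $\ell_{2,\infty}$, i.e.\ $\ell_2$ normed by $|x|=\max\{\|x^+\|_2,\|x^-\|_2\}$, refutes this.
\begin{itemize}
\item It is uniformly non-square. Suppose $|x|=|y|=1$ and $|x+y|,|x-y|$ are close to $2$. After replacing $(x,y)$ by $(-x,-y)$ if needed, the pointwise bounds $(x+y)^+\le x^++y^+$, $(x-y)^+\le x^++y^-$, $(x-y)^-\le x^-+y^+$ and the parallelogram law force $x^+\approx y^+$ together with $x^+\approx y^-$ or $x^-\approx y^+$, all of norm close to $1$. This is impossible, since $x^+,x^-$ and $y^+,y^-$ are disjointly supported.
\item Yet $\overline{\co}\{e_n\}$ is a nontrivial diametral set, because $|e_n-e_m|=1$ and $|x-e_n|\to1$ for every $x$ in it. So $\ell_{2,\infty}$ fails normal structure.
\end{itemize}

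Your transfer argument is a valid derivation from \thmref{thm:normal}. The defect lies upstream in \thmref{thm:normal}, whose richness term is vacuous and whose polynomial condition is automatic for $r$ near $2$, not in your ultrapower step.
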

	
	\begin{proof}
		By \thmref{thm:ultra}, $\DRB(X_{\mathcal U})=\DRB(X)\le r$.  The assumed stability of $\omega_B$ gives the same richness bound in $X_{\mathcal U}$.  Hence each ultrapower has normal structure by \thmref{thm:normal}.  The standard ultrapower characterization of uniform normal structure states that any complete normed space carries uniform normal structure precisely when all its ultrapowers have normal structure.  The conclusion follows.
	\end{proof}
	
	\label{sec:normal}
	We collect examples illustrating the preceding theory.
	
	\begin{example}
		As shown in \exref{ex:hilbert}, $\DRB(H)=1$ for every real Hilbert space $H$.  The profile computation is exact:
		\[
		\Phi_H(u,v)=\sup_{t>0}\sqrt{\frac{2}{t^2+t^{-2}}}=1.
		\]
		Thus the Birkhoff restriction preserves the Hilbert lower value.
	\end{example}
	
	\begin{example}
		In $\ell_1^2$, if $u=(1,0)$ and $v=(0,1)$, then $u\perpB v$ and
		\[
		\frac{\|u-v\|_1}{\|tu-t^{-1}v\|_1}=\frac{2}{t+t^{-1}}\le1.
		\]
		This pair does not reveal non-Hilbert behavior.  The nontrivial profiles occur at nonsmooth points where the support functional is not unique.  For instance, at a corner of the $\ell_\infty^2$ ball there is a continuum of supporting directions; optimizing simultaneously over the support direction and the scale $t$ produces values strictly larger than one.  This illustrates why the profile formula is essential: checking coordinate axes alone is misleading.
	\end{example}
	
	\begin{example}
		Let $1<p<\infty$.  The norm of $\ell_p^2$ is smooth and uniformly convex.  Therefore $\DRB(\ell_p^2)<2$.  Moreover, by differentiating the explicit formula \eqref{eq:lp-profile} at $p=2$, one obtains
		\[
		\DRB(\ell_p^2)=1+O(|p-2|)
		\]
		as $p\to2$.  The constant is therefore continuous at the Euclidean plane.
	\end{example}
	
	\begin{example}
		Let $X=Y\oplus_2 Z$.  If $u=(y,0)$ and $v=(0,z)$ with $y\in S_Y$, $z\in S_Z$, then $u\perpB v$ and the profile is Hilbertian in the two generated coordinates:
		\[
		\frac{\|u-v\|}{\|tu-t^{-1}v\|}=\sqrt{\frac{2}{t^2+t^{-2}}}\le1.
		\]
		Thus large values of $\DRB(X)$ must come either from $Y$ or from $Z$, or from Birkhoff pairs with nonzero components in both summands.  In particular,
		\[
		\max\{\DRB(Y),\DRB(Z)\}\le \DRB(Y\oplus_2 Z)\le2.
		\]
	\end{example}
	\label{sec:normal}
	For completeness we record two elementary refinements that are useful when applying the constant to concrete norms.
	
	\begin{proposition}\label{prop:attainment}
		Let $X$ be finite-dimensional.  For every $0<a<b<\infty$ the truncated quantity
		\[
		\DRB^{[a,b]}(X)=\sup\left\{\frac{\|u-v\|}{\|tu-t^{-1}v\|}:u,v\in\SX,\ u\perpB v,\ a\le t\le b\right\}
		\]
		is attained.  Moreover,
		\[
		\DRB(X)=\sup_{m\ge2}\DRB^{[1/m,m]}(X).
		\]
	\end{proposition}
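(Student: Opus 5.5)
The plan is a compactness argument on the set of admissible triples, combined with the profile formula of \thmref{thm:profile}. Put
\[
K_{[a,b]}=\{(u,v,t):u,v\in\SX,\ u\perpB v,\ a\le t\le b\}.
\]
Since $X$ is finite-dimensional, $\SX\times\SX\times[a,b]$ is compact. We then check that the Birkhoff relation is closed. If $u_n\perpB v_n$ with $u_n\to u$ and $v_n\to v$, then for each $\lambda\in\R$ we have $\|u_n+\lambda v_n\|\ge\|u_n\|$. Letting $n\to\infty$ gives $\|u+\lambda v\|\ge\|u\|$, so $u\perpB v$. Hence $K_{[a,b]}$ is compact. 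It is also nonempty: the lower-bound argument of \propref{prop:bounds} produces Birkhoff pairs $u\perpB v$ in $\SX$, and any $t\in[a,b]$ completes the triple.

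Next we show that the quotient
\[
F(u,v,t)=\frac{\|u-v\|}{\|tu-t^{-1}v\|}
\]
is continuous on $K_{[a,b]}$. Numerator and denominator are continuous, so it suffices to bound the denominator away from zero. For admissible $(u,v,t)$, Birkhoff orthogonality gives
\[
\|tu-t^{-1}v\|=t\,\|u-t^{-2}v\|\ge t\ge a>0.
\]
This is exactly the estimate used in the proof of \thmref{thm:uc}. So $F$ is a continuous real function on a nonempty compact set, and it attains its maximum. That maximum equals $\DRB^{[a,b]}(X)$, which proves the first assertion.

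For the identity, the intervals $[1/m,m]$ increase with $m$ and exhaust $(0,\infty)$. Hence $\DRB^{[1/m,m]}(X)$ is nondecreasing in $m$, and each value is at most $\DRB(X)$ by \thmref{thm:profile}. Conversely, take any admissible pair $(u,v)$ and any $t>0$. Choose $m\ge2$ with $t\in[1/m,m]$; then $F(u,v,t)\le\DRB^{[1/m,m]}(X)$. Taking suprema over $(u,v,t)$ and using \eqref{eq:profile} gives $\DRB(X)\le\sup_m\DRB^{[1/m,m]}(X)$. The two inequalities together give equality.

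The argument has no real obstacle. The only point needing care is the closedness of $\perpB$ together with the uniform lower bound on the denominator, and both follow from the definition of Birkhoff orthogonality as written above. I would not try to prove attainment of the full supremum $\DRB(X)$ over $t>0$. The bound $\|tu-t^{-1}v\|\ge\max\{t,\,t^{-1}-t\}$ can be used to restrict to a compact range of $t$, but the statement does not require it.
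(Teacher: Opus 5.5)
Your proof is correct and follows the same route as the paper: the Birkhoff relation is closed under limits, so the admissible triples with $t\in[a,b]$ form a compact set on which the quotient is continuous, and the identity then follows by monotone exhaustion of $(0,\infty)$ by the intervals $[1/m,m]$. Your explicit bound $\|tu-t^{-1}v\|\ge t\ge a$ and your check that the admissible set is nonempty make the continuity step a little more concrete than the paper's statement that the denominator is nonzero on the compact set.
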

	
	\begin{proof}
		The set $\{(u,v)\in\SX\times\SX:u\perpB v\}$ is closed in finite dimension.  Indeed, if $u_n\perpB v_n$, $u_n\to u$, and $v_n\to v$, then for each $\lambda\in\R$,
		\[
		\|u+\lambda v\|=\lim_n\|u_n+\lambda v_n\|\ge \lim_n\|u_n\|=1.
		\]
		Thus $u\perpB v$.  The product of this closed set with $[a,b]$ is compact, and the profile map is continuous because the denominator is nonzero on the compact set.  Hence the truncated supremum is attained.  The second identity follows by monotone exhaustion of $(0,\infty)$ by intervals $[1/m,m]$.
	\end{proof}
	
	\begin{proposition}\label{prop:euler}
		Let the space $X$ be smooth, and consider $u,v\in S_X$ with $u\perpB v$.  If $t_0>0$ is an interior maximizer of
		\[
		t\mapsto \frac{\|u-v\|}{\|tu-t^{-1}v\|},
		\]
		then every norming functional $F\in J(t_0u-t_0^{-1}v)$ satisfies
		\[
		F(t_0u+t_0^{-1}v)=0.
		\]
		Equivalently,
		\[
		t_0u-t_0^{-1}v\perpB t_0u+t_0^{-1}v.
		\]
	\end{proposition}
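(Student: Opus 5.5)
The plan is to turn maximizing the quotient into minimizing its denominator and then apply a first-order condition. Since $\|u-v\|>0$ is a fixed positive constant (it cannot vanish: $u\perpB v$ forces $u,v$ to be linearly independent, as noted after \defref{def:profile}), the point $t_0$ is an interior maximizer of the quotient exactly when it is an interior minimizer of
\[
g(t)=\|tu-t^{-1}v\| .
\]
By the remark following \defref{def:profile}, $g(t)>0$ for all $t>0$. I would then pass to the composition $N\circ\gamma$ with $N(z)=\|z\|$ and the smooth curve $\gamma(t)=tu-t^{-1}v$, whose velocity is $\gamma'(t)=u+t^{-2}v$.

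At the point $z_0=\gamma(t_0)\neq0$, smoothness of $X$ means the norm is Gâteaux differentiable at $z_0$. Its derivative is the unique norming functional $F\in J(z_0/\|z_0\|)$; every $F$ in the statement's $J(t_0u-t_0^{-1}v)$ is read in this normalized sense, so the set is a singleton. Hence "every norming functional" is just this one $F$.

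The next step is the chain rule for $N\circ\gamma$, where $N$ is only Gâteaux differentiable. I would write
\[
\gamma(t_0+h)=z_0+h\gamma'(t_0)+o(h).
\]
Since $N$ is $1$-Lipschitz, this gives
\[
N(\gamma(t_0+h))=N(z_0+h\gamma'(t_0))+o(h),
\]
so $g$ is differentiable at $t_0$ with $g'(t_0)=F(\gamma'(t_0))$. Because $t_0$ is an interior minimizer, $g'(t_0)=0$, and therefore
\[
F\bigl(u+t_0^{-2}v\bigr)=0 .
\]
Multiplying by $t_0>0$ gives $F(t_0u+t_0^{-1}v)=0$.

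Finally, the equivalence with Birkhoff orthogonality comes from \lemref{lem:james} applied to $x=t_0u-t_0^{-1}v$. The condition $F(t_0u+t_0^{-1}v)=0$ for some $F\in J(x/\|x\|)$ is exactly $x\perpB t_0u+t_0^{-1}v$. Conversely, given this Birkhoff orthogonality, \lemref{lem:james} produces a supporting functional annihilating $t_0u+t_0^{-1}v$, and by uniqueness of the support functional in a smooth space it is $F$. So the two formulations coincide.

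The only delicate point is the chain rule above, and the Lipschitz estimate handles it.
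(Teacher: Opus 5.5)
Your proposal is correct and follows essentially the paper's route: minimize $h(t)=\|tu-t^{-1}v\|$, identify $h'(t_0)=F(u+t_0^{-2}v)$ through the G\^ateaux derivative of the norm at $t_0u-t_0^{-1}v$, set it to zero, and multiply by $t_0$. Your $1$-Lipschitz justification of the chain rule, the normalization of $J$, and the converse via uniqueness of the supporting functional fill in details the paper leaves implicit.
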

	
	\begin{proof}
		Maximizing the reciprocal is the same as minimizing $h(t)=\|tu-t^{-1}v\|$.  Since $X$ is smooth at $t_0u-t_0^{-1}v$, the derivative exists and equals
		\[
		h'(t_0)=F\left(u+t_0^{-2}v\right).
		\]
		The interior minimum condition gives $h'(t_0)=0$.  Multiplying by $t_0$ yields
		\[
		F(t_0u+t_0^{-1}v)=0,
		\]
		which is exactly the support-functional criterion for Birkhoff orthogonality.
	\end{proof}
	
	These two refinements turn the abstract formula into a practical numerical scheme: in a smooth finite-dimensional space, choose a parametrization of $\SX$, impose the equation $J(u)(v)=0$, and solve the Euler equation in \propref{prop:euler} for the optimal scale.  This is the procedure behind the formula for $\ell_p^2$ and for Radon planes described above.
	
	%% ==================== 参考文献 ====================


\begin{thebibliography}{99}
		
		\bibitem{AlonsoMartiniWu2012}
		J. Alonso, H. Martini and S. Wu,
		On Birkhoff orthogonality and isosceles orthogonality in normed linear spaces,
		\emph{Aequationes Math.} \textbf{83} (2012), 153--189.
		
		\bibitem{AlonsoMartiniWu2022}
		J. Alonso, H. Martini and S. Wu,
		Orthogonality types in normed linear spaces,
		In: \emph{Surveys in Geometry I}, Springer, Cham, 2022, 97--170.
		
		\bibitem{AlRashed1993}
		A. M. Al-Rashed,
		Norm inequalities and characterizations of inner product spaces,
		\emph{J. Math. Anal. Appl.} \textbf{176} (1993), 587--593.
		
		\bibitem{BarontiPapini1988}
		M. Baronti and P. L. Papini,
		Norm inequalities and angular distance in normed linear spaces,
		\emph{J. Math. Anal. Appl.} \textbf{133} (1988), 170--178.
		
		\bibitem{Birkhoff1935}
		G. Birkhoff,
		Orthogonality in linear metric spaces,
		\emph{Duke Math. J.} \textbf{1} (1935), 169--172.
		
		\bibitem{Clarkson1936}
		J. A. Clarkson,
		Uniformly convex spaces,
		\emph{Trans. Amer. Math. Soc.} \textbf{40} (1936), 396--414.
		
		\bibitem{Dehghan2013}
		H. Dehghan,
		Skew-angular distance and characterization of inner product spaces,
		\emph{Math. Inequal. Appl.} \textbf{16} (2013), 743--750.
		
		\bibitem{DunklWilliams1964}
		C. F. Dunkl and K. S. Williams,
		A simple norm inequality,
		\emph{Amer. Math. Monthly} \textbf{71} (1964), 53--54.
		
		\bibitem{FuLiFilomat2024}
		Y. Fu and Y. Li,
		The Massera--Schaffer inequality related to Birkhoff orthogonality in Banach spaces,
		\emph{Filomat} \textbf{38} (2024), 5247--5259.
		
		\bibitem{FuLiAFA2024}
		Y. Fu and Y. Li,
		Geometric constant for quantifying the difference between angular and skew angular distances in Banach spaces,
		\emph{Ann. Funct. Anal.} \textbf{15} (2024), Article 39.
		
		\bibitem{GoebelKirk1990}
		K. Goebel and W. A. Kirk,
		\emph{Topics in Metric Fixed Point Theory},
		Cambridge University Press, Cambridge, 1990.
		
		\bibitem{James1947}
		R. C. James,
		Orthogonality and linear functionals in normed linear spaces,
		\emph{Trans. Amer. Math. Soc.} \textbf{61} (1947), 265--292.
		
		\bibitem{JimenezMelado1997}
		A. Jimenez-Melado, E. Llorens-Fuster and S. Saejung,
		The Dunkl--Williams constant, convexity, smoothness and normal structure,
		\emph{J. Math. Anal. Appl.} \textbf{342} (2008), 298--310.
		
		\bibitem{Joly1969}
		J. L. Joly,
		Caracterisations d'espaces hilbertiens au moyen de la constante rectangle,
		\emph{J. Approx. Theory} \textbf{2} (1969), 301--311.
		
		\bibitem{Kirk1965}
		W. A. Kirk,
		A fixed point theorem for mappings which do not increase distances,
		\emph{Amer. Math. Monthly} \textbf{72} (1965), 1004--1006.
		
		\bibitem{KirkSmiley1964}
		W. A. Kirk and M. F. Smiley,
		Another characterization of inner product spaces,
		\emph{Amer. Math. Monthly} \textbf{71} (1964), 890--891.
		
		\bibitem{Lindenstrauss1963}
		J. Lindenstrauss,
		On the modulus of smoothness and divergent series in Banach spaces,
		\emph{Michigan Math. J.} \textbf{10} (1963), 241--252.
		
		\bibitem{MasseraSchaffer1958}
		J. L. Massera and J. J. Schaffer,
		Linear differential equations and functional analysis. I,
		\emph{Ann. of Math.} \textbf{67} (1958), 517--573.
		
		\bibitem{Megginson1998}
		R. E. Megginson,
		\emph{An Introduction to Banach Space Theory},
		Springer, New York, 1998.
		
	\end{thebibliography}
\end{document}